\documentclass[a4paper]{amsart}
\usepackage{amsfonts}
\usepackage{amssymb}

%below comment for psp
\usepackage{amsthm}

\usepackage{amsmath}
\usepackage{mathtools}
\usepackage{cite}
\usepackage{enumerate}
\usepackage{mdframed}
\usepackage[all]{xy}
\usepackage{graphicx}
\usepackage{mathrsfs}
\usepackage{setspace}
\usepackage{comment}

\DeclareMathOperator{\C}{\mathcal{C}}
\DeclareMathOperator{\D}{\mathcal{D}}
\DeclareMathOperator{\R}{\mathbb R}

\DeclareMathOperator{\Haus}{\mathscr{H}}

\DeclareMathOperator{\N}{\mathbb N}

\title{H\"older differentiability of self-conformal devil's staircases}
\author{Sascha Troscheit}
\address{Sascha Troscheit\\Department of Mathematics\\University of Bristol\\University Walk\\Clifton\\ Bristol\\BS8 1TW\\UK}
\email{sascha.troscheit@bristol.ac.uk}

\newtheorem{Theo}{Theorem}

\newtheorem{Lemma}{Lemma}[section]
\newtheorem{Cor}{Corollary}

\renewcommand{\epsilon}{\varepsilon}

\begin{document}
\maketitle

\begin{abstract}
In this paper we consider the probability distribution function of a Gibbs measure supported on a self-conformal set given by an iterated function system (devil's staircase). We use thermodynamic multifractal formalism to calculate the Hausdorff dimension of the sets $S^{\alpha}_{0}$, $S^{\alpha}_{\infty}$ and $S^{\alpha}$, the set of points at which this function has, respectively, H\"older derivative $0$, $\infty$ or no derivative in the general sense. This extends recent work by Darst, Dekking, Falconer, Kesseb\"ohmer and Stratmann and Yao, Zhang and Li.
\end{abstract}

\section{Introduction}
Over the last years several authors studied a family of functions called devil's staircases or Cantor functions which is the cumulative probability distribution function of a probability measure on a set with zero Lebesgue measure. This analysis started with Bernoulli probability measures supported on simple self-similar sets. The findings grew in complexity to encompass self-conformal sets using methods of thermodynamic formalism and mostly focussed on finding the points where the derivative does not exist in the general sense and giving the dimension of all such sets. Certain assumptions were made to ease the classification, which included a condition that necessitated the supremum of the derivative to be infinite. In this paper we shall omit this condition and look at Gibbs measures given by H\"older continuous potential functions on self-conformal sets.

Given a finite family of conformal (differentiable) contractions $\mathcal F:=\{f_{j}\,;\,j\in J\}$, where $J$ is the finite indexing set, we consider the limit set $E$ invariant under $\mathcal F$ i.e.\ $E=\bigcup_{j\in J}f_{j}(E)$. We also require the functions to satisfy the H\"older condition and strong separation condition (defined in Section~\ref{thermoSect}) and we can then give each point in $E$ a unique symbolic coding dependent on which image of the function it is contained in at each iteration when applied to some seed set $X\supset E$.

Due to their uniqueness we will treat the point and its coding as equal and whether coding or actual point are used will be clear from context.
We will take $j=0$ and $j=1$ to correspond to the left- and rightmost element, respectively and define the geometrical potential ${\varphi(x):=\log|f_{j}'(f^{-1}_{j}(x)|}$ for $x\in f_{j}(E)$. 
We will refer to the topological pressure by $P(.)$ and using Bowen's formula find that for some value $\delta$ we have $P(\delta\varphi)=0$. The value $\delta$ then corresponds to the Hausdorff dimension of $E$, $\dim_{H}E=\delta$. 
We then consider the measures $\mu_{\psi}$ associated with H\"older continuous potentials $\psi$ such that $P(\psi)=0$ and $\psi<0$. 
The probability distribution function $F_{\psi}(x):=\mu_{\psi}([0,x))$ associated with the potential is called a devil's staircase and we are interested in the dimensions of $S^{\alpha}_{0}$, $S^{\alpha}_{\infty}$ and $S^{\alpha}$, the sets where $F_{\psi}$, respectively, has $\alpha$-H\"older derivative $0$, $\infty$ and no derivative in the general sense, that is neither finite nor infinite.

When we take $E$ as the Cantor middle third set, i.e. we let $\mathcal F$ be a family of two similarities with contraction ratio $1/3$ and consider the Bernoulli measure giving each coding equal weight of 1/2 we find that the Hausdorff dimension of $S^{1}$ is $(\log2/\log3)^{2}$. This was first shown by Richard Darst (see \cite{Darst93}) who later extended his analysis to middle-$\zeta$ sets for $1/3<\zeta\leq1/2$ (see \cite{Darst95}). Kenneth Falconer (see \cite{Falconer04}) later showed that for $\delta$-Ahlfors regular measures we have $\dim_{H}S^{\alpha}=(\dim_{H}E)^{2}/\alpha$. However this squaring relation does not necessarily extend to cases where the measure is not $\delta$-Ahlfors regular. Some examples of such systems with their dimension were given by Jerry Morris (see \cite{Morris02}) and it was not until 2007 when Wenxia Li (see \cite{Li07}) published a complete description of $S^{1}$ for self-similar families of functions with Bernoulli measures giving each symbol in $j\in J$ probability $p_{j}$, where the contraction ratio of $f_{j}$ is $a_{j}$. This was however done with the assumption that $p_{j}>a_{j}$ for every $j$. The step from self-similar to self-conformal families was then done by Kesseb\"ohmer and Stratmann (see \cite{Kesseboehmer09}), who found the dimension of $S^{\alpha}$ for devil's staircases given by distribution functions of Gibbs measures for self-conformal limit sets $E$. This was also done by considering only those cases where $\alpha\varphi(x)<\psi(x)$ for all $x\in E$ a condition equivalent to the Li condition for self-similar $E$. The reason for restricting attention to those sets only is that the limit supremum of the $\alpha$-H\"older derivative is always infinite and classifying points in $S^{\alpha}$ becomes finding points with finite limit infimum. This also makes the task of finding the Hausdorff dimension of $S_{0}^{\alpha}$ and $S_{\infty}^{\alpha}$ superfluous as we must necessarily have $S_{0}^{\alpha}=\varnothing$ and $\dim_{H}S_{\infty}^{\alpha}=\delta$. In this paper we extend on this work and give the Hausdorff dimension of $S_{0}^{\alpha}$, $S_{\infty}^{\alpha}$ and $S^{\alpha}$ for self-conformal $E$ with a finite family $\mathcal F$  by considering the local dimension of points. At this stage it is worth noting a paper by Yuan Yao, Yunxiu Zhang and Wenxia Li, who, for a limited range, found the value of $\dim_{H}S^{1}$ and lower bounds of $S^{1}_{0}$ and $S^{1}_{\infty}$ for self-similar sets with two contractions (see \cite{Li09}).

Our main results are summarised in the following two theorems.
\begin{Theo}\label{firstTheo}
Let $H(\gamma(q)):=T(q)+\gamma(q)q$, where $\gamma(q):=-T'(q)$ and $T(q)$ is such that it satisfies
$$P(T(q)\varphi+q\psi)=0$$
Let $\alpha$ be given and $q$ be such that $\gamma(q)=\alpha$. If such $q\in\R$ exists we have for q=0
$$\dim_{H}S^{\alpha}_{0}=\dim_{H}S^{\alpha}_{\infty}=H(0)=\delta$$
For $q<0$
$$\dim_{H}S^{\alpha}_{0}=H(\alpha)\text{ and }\dim_{H}S^{\alpha}_{\infty}=\delta$$
and for $q>0$
$$\dim_{H}S^{\alpha}_{0}=\delta\text{ and }\dim_{H}S^{\alpha}_{\infty}=H(\alpha)$$
If such $q$ does not exist and for all $x\in\R$ we have $\gamma(x)<\alpha$ then
$$\dim_{H}S^{\alpha}_{0}=0\text{ and }\dim_{H}S^{\alpha}_{\infty}=\delta$$
and if $\gamma(x)>\alpha$
$$\dim_{H}S^{\alpha}_{0}=\delta\text{ and }\dim_{H}S^{\alpha}_{\infty}=0$$
\end{Theo}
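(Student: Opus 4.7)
The plan is to reduce H\"older differentiability of $F_\psi$ at $x\in E$ to the asymptotic behaviour of the Birkhoff sums $S_n(\psi-\alpha\varphi)(x)$, and then to read off the Hausdorff dimensions from the multifractal analysis of $\mu_\psi$. Bounded distortion (a consequence of the conformal/H\"older hypothesis) together with strong separation lets one match balls $B(x,r)$ with cylinder sets $f_{j_1\cdots j_n}(E)$ of comparable diameter, where $n=n(x,r)$ is chosen accordingly. The Gibbs property of $\mu_\psi$ (with $P(\psi)=0$) then gives $\mu_\psi(B(x,r))\asymp\exp(S_n\psi(x))$ and $r\asymp\exp(S_n\varphi(x))$, so the $\alpha$-H\"older difference quotient satisfies
$$\frac{|F_\psi(y)-F_\psi(x)|}{|y-x|^\alpha}\asymp\exp\bigl(S_n(\psi-\alpha\varphi)(x)\bigr).$$
Consequently, up to the boundary level set, $S^\alpha_0\subseteq\{\underline{d}_{\mu_\psi}\geq\alpha\}$ and $S^\alpha_\infty\subseteq\{\overline{d}_{\mu_\psi}\leq\alpha\}$, with the strict-inequality versions guaranteeing membership.

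Next I would apply the standard multifractal formalism for Gibbs measures on self-conformal repellers: $P(T(q)\varphi+q\psi)=0$ defines an analytic, strictly convex $T$ with $T(0)=\delta$, and $\gamma=-T'$ parametrises the spectrum $[\alpha_{\min},\alpha_{\max}]$. For $\alpha=\gamma(q)$ in this range, $\dim_H\{x : d_{\mu_\psi}(x)=\alpha\}=H(\alpha)=T(q)+q\alpha$, with the lower bound realised by the Gibbs measure for $T(q)\varphi+q\psi$. Direct computation gives $\frac{d}{dq}H(\gamma(q))=-qT''(q)$, so $H\circ\gamma$ attains its unique maximum $\delta$ at $q=0$ and is strictly monotone on each side. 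For $q>0$, the set $\{d_{\mu_\psi}=\gamma(0)\}$ satisfies $\gamma(0)>\gamma(q)=\alpha$ and hence lies in $S^\alpha_0$, giving $\dim_H S^\alpha_0\geq H(\gamma(0))=\delta$; on the other hand $S^\alpha_\infty\subseteq\bigcup_{q'\geq q}\{\overline{d}_{\mu_\psi}=\gamma(q')\}$, and monotonicity of $H\circ\gamma$ on $[q,\infty)$ yields $\dim_H S^\alpha_\infty\leq H(\alpha)$, matched from below by a careful subset of $\{d_{\mu_\psi}=\alpha\}$. The case $q<0$ is symmetric, $q=0$ sits at the common maximum, and the cases $\alpha\notin[\alpha_{\min},\alpha_{\max}]$ are immediate since then every $x\in E$ has $d_{\mu_\psi}(x)$ strictly on one side of $\alpha$, outside a dimension-zero exceptional set.

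The main obstacle is the boundary set $\{\underline{d}_{\mu_\psi}(x)=\overline{d}_{\mu_\psi}(x)=\alpha\}$, where $S_n(\psi-\alpha\varphi)(x)$ is of lower order than $|S_n\varphi(x)|$ and can drift to $\pm\infty$ or oscillate, so all three of $S^\alpha_0$, $S^\alpha_\infty$, $S^\alpha$ can intersect it. Establishing the claimed dimensions requires (i) showing that the portion contributing to $S^\alpha_\infty$ (respectively $S^\alpha_0$) contains a subset of dimension exactly $H(\alpha)$, by exhibiting fluctuations of the $q$-Gibbs measure that drift in the correct direction, and (ii) a Legendre-transform covering bound, using level-$n$ cylinders weighted by $\exp(qS_n\psi+T(q)S_n\varphi)$, to show nothing beyond $H(\alpha)$ contributes. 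This is precisely what the simplifying assumption $\alpha\varphi<\psi$ in \cite{Kesseboehmer09} sidestepped by forcing $S_n(\psi-\alpha\varphi)\to+\infty$ automatically for every $x$.
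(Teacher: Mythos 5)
Your overall architecture (sandwich $S^{\alpha}_{0}$ and $S^{\alpha}_{\infty}$ between one-sided local-dimension level sets, then read off dimensions from the multifractal spectrum of $\mu_{\psi}$ via $T$, $\gamma=-T'$ and $H$) is the same as the paper's. But there is a genuine gap in the membership criteria you use, and it breaks every lower bound you claim for $S^{\alpha}_{\infty}$. The two-sided asymptotic $|F_{\psi}(y)-F_{\psi}(x)|/|y-x|^{\alpha}\asymp\exp(S_{n}(\psi-\alpha\varphi)(x))$ is false: $F_{\psi}(y)-F_{\psi}(x)=\mu_{\psi}([x,y])$, and when the coding of $x$ has a long block of the extremal letter $i$ starting at level $n$, the interval $[x,y]$ for nearby $y$ captures only a deep subcylinder, giving $\mu_{\psi}([x,y])\asymp\exp(S_{n}(\psi-\alpha\varphi)(x)+k\,\psi(\underline i))$ with $k$ the block length. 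Only the upper bound $\mu_{\psi}([x,y])\leq\mu_{\psi}(B(x,|y-x|))$ holds for all $y$; the matching lower bound requires choosing $y$ to be a cylinder endpoint so that $[x,y]$ contains a full sibling cylinder. Consequently your claim that the strict inequality $\overline d_{x}<\alpha$ guarantees $x\in S^{\alpha}_{\infty}$ is wrong: points with $\overline d_{x}<\alpha$ but with $i$-blocks of lengths $k_{m}$ at levels $n_{m}$ keeping $\exp(S_{n_{m}}(\psi-\alpha\varphi)+k_{m}\psi(\underline i))$ bounded have finite liminf and infinite limsup, hence lie in $S^{\alpha}$ rather than $S^{\alpha}_{\infty}$. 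This set (the paper's $S^{\alpha}_{*}$ in Theorem~\ref{secondTheo}) is typically nonempty and of positive dimension, so the obstruction is not confined to the boundary set $\{\underline d=\overline d=\alpha\}$ where you locate the ``main obstacle''; it pervades $\{\overline d<\alpha\}$.

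The missing ingredient is the paper's lemma that a point with \emph{exact} local dimension $\kappa<\alpha$, $\kappa\neq\psi(\underline j)/\varphi(\underline j)$, must lie in $S^{\alpha}_{\infty}$: exactness of the local dimension caps the admissible length $l_{n}$ of $j$-blocks (a longer block would drag the log-ratio toward $\psi(\underline j)/\varphi(\underline j)$), and one checks $\exp(S_{k_{n}}(\psi-\alpha\varphi)+l_{n}\psi(\underline j))\to\infty$ under that cap. With this lemma one gets $K_{\alpha-\epsilon}\subseteq S^{\alpha}_{\infty}\subseteq K_{\leq\alpha}$ and the lower bound follows from $\dim_{H}K_{\alpha-\epsilon}=H(\alpha-\epsilon)\to H(\alpha)$ (or from $K_{\gamma(0)}$ when $q\leq0$), which is both cleaner and more robust than your proposal to extract a ``careful subset'' of $\{d_{\mu_{\psi}}=\alpha\}$ with drifting Birkhoff sums. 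Your $S^{\alpha}_{0}$ bounds and the upper bounds for $S^{\alpha}_{\infty}$ are essentially sound (modulo replacing the union $\bigcup_{q'\geq q}\{\overline d=\gamma(q')\}$, an uncountable union which does not directly bound dimension, by the one-sided set $K_{\leq\alpha}$ and a packing-type estimate with the auxiliary Gibbs measure $\mu_{\varphi_{q}}$, as in the paper's Corollary~\ref{PesinCor}); it is the lower bounds for $S^{\alpha}_{\infty}$ that are unsupported as written.
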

\begin{Theo}\label{secondTheo}
The dimension of non-$\alpha$-H\"older-differentiability $\dim_{H}S^{\alpha}$ is $0$ if for all $q\in\R$ we have $\gamma(q)>\alpha$ otherwise it is given by  
\begin{equation}\label{theocond}\dim_{H}S^{\alpha}=\inf\left\{\beta(t)\,;\,t\in\R\text{ and }\beta(t)\geq-t\frac{\psi(\underline i)}{\varphi(\underline i)}\text{ for }i\in\{0,1\}\right\}\end{equation}
and $\beta(t)$ given implicitly by $P((\beta(t)-\alpha t)\varphi+t\psi)=0$. For $\alpha=1$ this, of course, corresponds to the regular first derivative.
\end{Theo}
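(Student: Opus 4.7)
The plan is a thermodynamic multifractal analysis of the potential $\psi-\alpha\varphi$, together with a careful boundary correction at the extremal cylinders. By the strong separation condition and Koebe's distortion theorem, for $x\in E$ coded by $\omega$ and $r\asymp\diam([\omega|_n])$ one has
\[
\mu_\psi(B(x,r))/r^\alpha\asymp\exp\bigl(S_n(\psi-\alpha\varphi)(\omega)\bigr),
\]
provided the ball does not spill past the endpoints of $E$; hence $x\in S^\alpha$ is equivalent to the Birkhoff sum $S_n(\psi-\alpha\varphi)(\omega)$ failing to converge in the extended reals.

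For the upper bound I would fix $t\in\R$ admissible, i.e.\ satisfying $\beta(t)\geq-t\psi(\underline i)/\varphi(\underline i)$ for $i\in\{0,1\}$, and use that $P((\beta(t)-\alpha t)\varphi+t\psi)=0$ together with the Gibbs property of the associated equilibrium state gives
\[
\sum_{|\omega|=n}\diam([\omega|_n])^{\beta(t)}\exp\bigl(tS_n(\psi-\alpha\varphi)(\omega)\bigr)=O(1).
\]
For $x\in S^\alpha$ the quantity $tS_n(\psi-\alpha\varphi)(\omega)$ must be $\geq-M(x)$ for infinitely many $n$, else the H\"older quotient would have a definite limit. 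Stratifying $S^\alpha$ by $M$ and extracting subcovers from each stratum produces covers of bounded $\beta(t)$-premeasure, hence $\dim_H S^\alpha\leq\beta(t)$. The endpoint constraint is exactly what makes this effective on half-balls approaching the fixed points $\underline 0$ and $\underline 1$, where $\mu_\psi(B(x,r))$ picks up a boundary tail of order $r^{\psi(\underline i)/\varphi(\underline i)}$ that the interior cylinder estimate misses.

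For the matching lower bound, choose $t^\ast$ realising $\inf\beta(t)$ subject to the admissibility constraint and two distinct ergodic equilibrium states $\nu^\pm$ whose Lyapunov ratios $\int\psi\,d\nu^\pm/\int\varphi\,d\nu^\pm$ lie on opposite sides of $\alpha$ with Hausdorff dimensions at least $\beta(t^\ast)-\epsilon$. Build a Moran-type subset of $E$ by forcing codings to look $\nu^+$-typical on a very sparse sequence of blocks of length $n_{2k}$ and $\nu^-$-typical on the intervening blocks of length $n_{2k+1}$; super-exponential spacing ensures $S_n(\psi-\alpha\varphi)$ truly oscillates between $\pm\infty$, placing the set inside $S^\alpha$. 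A standard mass distribution argument with a concatenated Gibbs measure then yields dimension at least $\beta(t^\ast)-\epsilon$, and ergodicity makes the endpoint constraint automatic at this step. The principal obstacle throughout is the boundary correction: turning the heuristic that $\mu_\psi(B(x,r))/r^\alpha$ acquires an endpoint tail of order $r^{\psi(\underline i)/\varphi(\underline i)-\alpha}$ into the clean admissibility constraint $\beta(t)\geq-t\psi(\underline i)/\varphi(\underline i)$ demands a sharp tail estimate for long extremal runs, and it is precisely this boundary analysis that distinguishes the present result from the earlier work of Kesseb\"ohmer and Stratmann.
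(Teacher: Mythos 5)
Your proposal has a genuine gap, and it sits at the very first step: the claimed equivalence between $x\in S^{\alpha}$ and the failure of $S_{n}(\psi-\alpha\varphi)(\omega)$ to converge in the extended reals is false, and both of your bounds inherit the error. The H\"older quotient is a \emph{one-sided} quantity: if the coding of $\omega$ carries an $i$-block of length $k$ at level $n$, the nearest point $\eta$ of $E$ on that side satisfies $|\omega-\eta|\asymp e^{S_{n}\varphi(\omega)}$ while $|F_{\psi}(\omega)-F_{\psi}(\eta)|\asymp e^{S_{n}\psi(\omega)+k\psi(\underline i)}$, so the quotient is $\asymp e^{S_{n}(\psi-\alpha\varphi)(\omega)+k\psi(\underline i)}$ and can remain bounded even when $S_{n}(\psi-\alpha\varphi)(\omega)\to+\infty$, provided $k\gtrsim -S_{n}(\psi-\alpha\varphi)(\omega)/\psi(\underline i)$. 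Consequently points with $\overline d_{\omega}<\alpha$ --- for which your two-sided cylinder quotient tends to $\infty$ --- lie in $S^{\alpha}$ whenever their codings contain suitably long runs of $0$s or $1$s at arbitrarily deep levels; in the classical regime $\alpha\varphi<\psi$ this describes \emph{all} of $S^{\alpha}$. This is not a boundary tail localised near the fixed points $\underline 0,\underline 1$: it occurs on a set dense in $E$ of full dimension $\beta(t_{s})$, and it is the sole source of the constraint $\beta(t)\geq-t\psi(\underline i)/\varphi(\underline i)$ in (\ref{theocond}). The paper isolates it in Lemma~\ref{boundedDifLem} and covers such points by cylinders with an attached $i$-block of length $n_{\epsilon}=\lfloor -n(1-\epsilon)/\psi(\underline i)\rfloor$ (the classes $\C_{n}^{+,i}$), whose extra contraction by $e^{n_{\epsilon}\varphi(\underline i)}$ is exactly what turns the Gibbs estimate into the constrained infimum. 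Your cover by plain cylinders weighted by $e^{tS_{n}(\psi-\alpha\varphi)}$ cannot see this, and taken at face value for $t>0$ it would ``bound'' the dimension of the whole set $\{\overline d_{\omega}<\alpha\}$ by $\beta(t)$, which is false.

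The lower bound has the matching defect. Your oscillation construction, alternating $\nu^{+}$- and $\nu^{-}$-typical blocks so that $S_{n}(\psi-\alpha\varphi)$ swings between $\pm\infty$, reproduces the paper's first lower-bound lemma and yields $H(\alpha)=\beta(t_{0})$; but when the constraint in (\ref{theocond}) is active the correct value is $\beta(t_{s})>H(\alpha)$, and in the classical case $t_{0}$ does not exist at all. The pair of measures you require is unavailable: in the classical case there is no ergodic measure with Lyapunov ratio exceeding $\alpha$, and in general whichever of the two ratios lies on the far side of $\alpha$ from $\gamma(0)$ forces the smaller of the two dimensions to be at most $H(\alpha)<\beta(t_{s})$. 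The paper instead takes a \emph{single} measure $\mu_{q_{1}}$ with $\gamma(q_{1})<\alpha$ and dimension close to $\beta(t_{s})$ and interleaves its typical words with $0$-blocks of calibrated length $m_{k}=\lfloor -N_{k}/\psi(\underline 0)\rfloor$ (the set $\mathcal M$), so that the one-sided liminf of the quotient is finite while the limsup is infinite. Without this block mechanism on both sides of the argument the theorem cannot be recovered.
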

\begin{figure}
  \centering
  \setlength{\unitlength}{\textwidth/10}
  \begin{picture}(10,7)
  \put(5.6,2.86){\mbox{$\dim_{H}S^{\alpha}$}}
  \put(3.9,2.93){\dashbox{0.1}(1.6,0){}}
  \put(7.05,3.7){\mbox{$(1,\alpha)$}}
  \put(5.5,4.2){\vector(-1,-2){0.4}}
  \put(7,3.8){\vector(-1,1){0.3}}
  \put(5.55,4.25){\mbox{$\delta$}}
  \put(7.5,5.2){\mbox{$\beta(t)$}}
  \put(6,0.5){\mbox{$-t\frac{\psi(\underline 1)}{\varphi(\underline 1)}$}}
  \put(7,1.5){\mbox{$-t\frac{\psi(\underline 0)}{\varphi(\underline 0)}$}}
  \put(1.5,0){\includegraphics[width=7\unitlength]{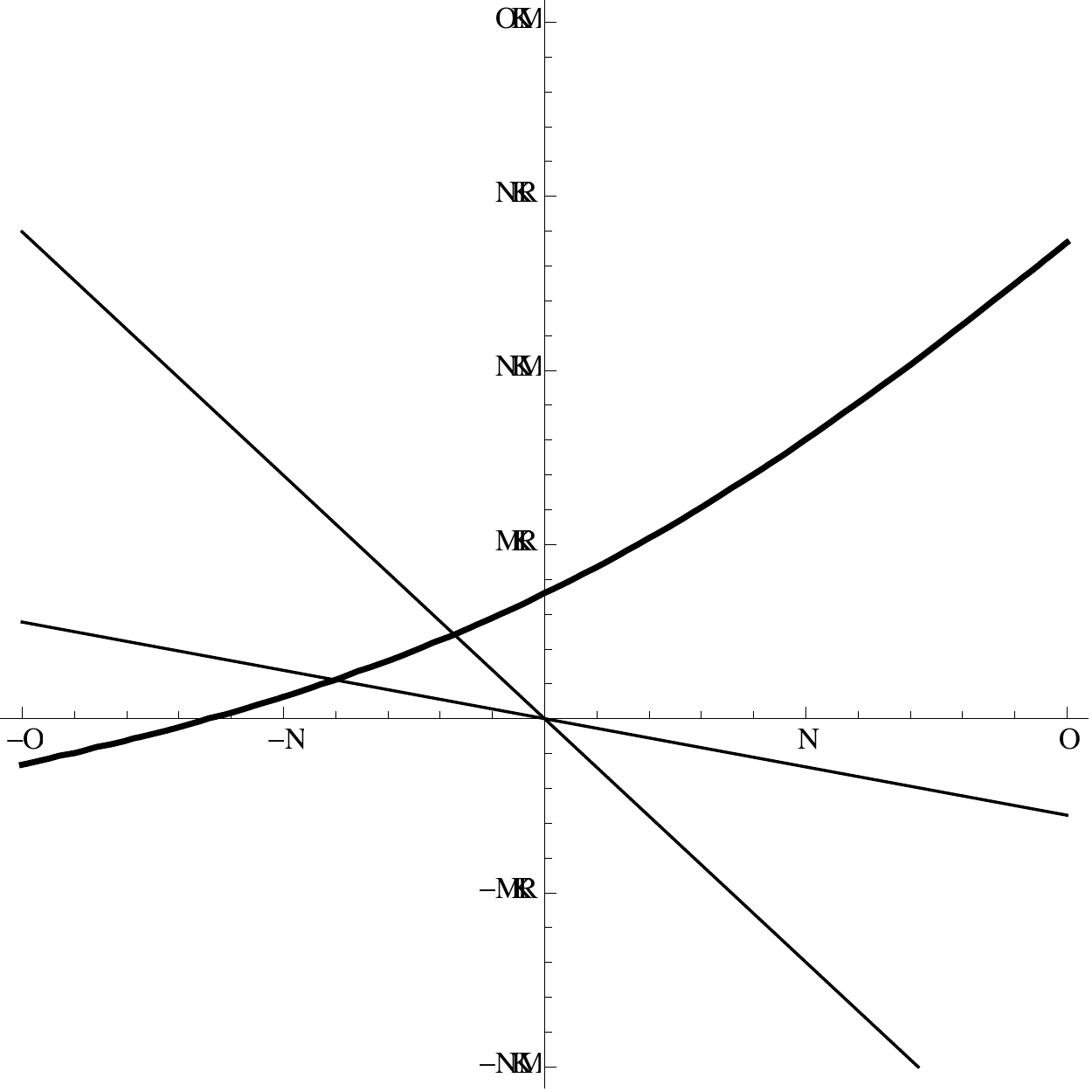}}
  \end{picture}
  \caption{Classical case when $\alpha\varphi<\psi$ for $a_{0}=0.2$, $a_{1}=0.1$, $p_{0}=0.8$, $p_{1}=0.2$ and $\alpha=0.8$.}
  \label{nomin}
\end{figure}
The three main types of non-trivial Hausdorff dimension for $S^{\alpha}$ are given in Figures~\ref{nomin}--\ref{min} with the example of two linear contractions with $a_{0}=0.1$, $a_{1}=0.2$ and $p_{1}=1-p_{0}$, varying $p_{0}$. We are for this example considering $\alpha=0.8$. Note that $\beta(0)=\delta$, $\beta(1)=\alpha$ and the minimum value of $\beta(t)$, if it exists, is at $t_{0}$, where $\gamma(t_{0})=\alpha$. The value of $\dim_{H}S^{\alpha}$ in Theorem~\ref{secondTheo} can be paraphrased as the least value of $\beta(t)$ to the right of any intersection with the $-t\psi(\underline i)/\varphi(\underline i)$ lines. Figure~\ref{nomin} gives the classical case considered by Kesseb\"ohmer and Stratmann, who presented their result similarly, though in terms of the intersections itself. The problem with this description is however that when $\psi(x)>\alpha\varphi(x)$ for some $x$ the function $\beta(t)$ has a minimum and the intersections may no longer exist. Also the upper bound predicted by Kesseb\"ohmer and Stratmann's work could give an upper bound higher than $\delta$. The graph in Figure~\ref{inter} shows that $\beta(t)$ has a minimum, although $\dim_{H}S^{\alpha}$ is still the $\beta(t)$ value at the rightmost intersection. Plotting the dimension depending on the applied potential we would get a phase change when the intersection and minimum coincide. This can be observed in the example at the end of the paper and its associated Figure~\ref{p1Form}. In varying the potential further we get a graph as in Figure~\ref{min}, where the intersection is higher than $\delta$ and the minimum of $\beta(t)$ gives $\dim_{H}S^{\alpha}$.

One can see that varying potential functions causes $\dim_{H}S^{\alpha}$ to track either $\dim_{H}S^{\alpha}_{0}$ or $\dim_{H}S^{\alpha}_{\infty}$ and after passing the phase transition to lie between those two dimensions. This is formalised in the following corollary, which follows easily from Theorem~\ref{secondTheo}. Let $v_{i}$ be such that $\beta(v_{i})=-v_{i}\psi(\underline i)/\varphi(\underline i)$ for $i\in\{0,1\}$. If such $v_{k}$ does not exist for some $k$, let $v_{k}=-\infty$. Similarly take $q_{0}$ such that $\gamma(q_{0})=\alpha$ and if it does not exist define $q_{0}=-\infty$. Now let $\overline v=\sup\{v_{0}, v_{1}\}$. The phase transition then happens for potentials that have $\overline v=q_{0}$ and we immediately get
\begin{Cor}
There exist three cases, the first occurs when $\overline v\leq q_{0}\leq0$ and we then have $\dim_{H}S^{\alpha}=\dim_{H}S^{\alpha}_{0}$. If however $\overline v<0\leq q_{0}$ then $\dim_{H}S^{\alpha}=\dim_{H}S^{\alpha}_{\infty}$. Finally if $q_{0}<\overline v$ then $\dim_{H}S_{0}^{\alpha}\leq\dim_{H}S^{\alpha}\leq\dim_{H}S^{\alpha}_{\infty}$.
\end{Cor}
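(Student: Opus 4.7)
The plan is to derive the corollary by a case analysis of the infimum in \eqref{theocond} after identifying the shape of $\beta(t)$.

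First I would observe that comparing $P(T(t)\varphi+t\psi)=0$ with $P((\beta(t)-\alpha t)\varphi+t\psi)=0$ and invoking uniqueness of the zero of the pressure in its conformal argument yields $\beta(t)=T(t)+\alpha t$. Consequently $\beta$ is convex, $\beta(0)=\delta$, $\beta(1)=\alpha$, and the minimum $\beta(q_0)=T(q_0)+\alpha q_0=H(\alpha)$ is attained where $\gamma(q_0)=\alpha$. The feasibility condition in \eqref{theocond} reads $\beta(t)\geq -t\psi(\underline i)/\varphi(\underline i)$ for $i\in\{0,1\}$; since $\beta(0)=\delta>0$ while the linear bounds have negative slope (both $\psi$ and $\varphi$ being negative), each root $v_i$ lies in $(-\infty,0]$, so $\overline v\leq 0$, and the infimum in \eqref{theocond} is attained at $\max\{q_0,\overline v\}$ whenever $q_0$ is finite, because $\beta$ is non-decreasing on $[q_0,\infty)$.

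The three cases now follow by substitution into Theorem \ref{firstTheo}. In Case~1 ($\overline v\leq q_0\leq 0$) the minimum is feasible, so $\dim_H S^\alpha=\beta(q_0)=H(\alpha)$, which Theorem \ref{firstTheo} identifies with $\dim_H S_0^\alpha$ via $q_0\leq 0$. In Case~2 ($\overline v<0\leq q_0$) the same reasoning gives $\dim_H S^\alpha=H(\alpha)=\dim_H S_\infty^\alpha$ via $q_0\geq 0$. In Case~3 ($q_0<\overline v$) one necessarily has $q_0<\overline v\leq 0$, so Theorem \ref{firstTheo} yields $\dim_H S_0^\alpha=H(\alpha)$ and $\dim_H S_\infty^\alpha=\delta$; the infimum becomes $\dim_H S^\alpha=\beta(\overline v)$, and monotonicity of $\beta$ on $[q_0,0]\ni\overline v$ delivers $H(\alpha)=\beta(q_0)\leq\beta(\overline v)\leq\beta(0)=\delta$, exactly the required bracketing.

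The only non-trivial step is the claim $\overline v\leq 0$, which amounts to verifying that the linear constraints cannot force the feasibility boundary onto the positive half-line. I would prove this by an asymptotic comparison between $\beta(t)$ and the lines $-t\psi(\underline i)/\varphi(\underline i)$ as $t\to-\infty$, using the standard multifractal relation that $\psi(\underline i)/\varphi(\underline i)$ is the local dimension of $\mu_\psi$ at the fixed point $\underline i$, hence lies inside the closure of the range of $\gamma$; combined with convexity of $g_i(t):=\beta(t)+t\psi(\underline i)/\varphi(\underline i)$ and the fact that $g_i(0)=\delta>0$, this places the root to the left of the origin. All other steps are direct substitutions from Theorem \ref{firstTheo} and routine monotonicity.
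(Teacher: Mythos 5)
Your proposal is correct and is precisely the argument the paper intends: the paper states the corollary ``follows easily from Theorem~\ref{secondTheo}'' without writing out a proof, and your reduction --- $\beta(t)=T(t)+\alpha t$ convex with minimum $\beta(q_{0})=H(\alpha)$, constraints binding only at $t\leq0$ so the infimum in (\ref{theocond}) sits at $\max\{q_{0},\overline v\}$, then matching against Theorem~\ref{firstTheo} case by case --- is exactly that reasoning made explicit. The only step I would tighten is the justification of $\overline v\leq0$: convexity of $g_{i}$ together with $g_{i}(0)=\delta>0$ does not by itself exclude a root at positive $t$; the clean argument is simply that for $t>0$ the lines $-t\psi(\underline i)/\varphi(\underline i)$ are negative while $\beta(t)\geq H(\alpha)\geq0$, so no intersection can occur there.
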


We will now continue this paper by recalling basic thermodynamic and multifractal analysis which will be used to provide a concise prove of Theorem~\ref{firstTheo} by considering the connection between local dimension and derivability. In Section~\ref{secondTheoSect} we will prove Theorem~\ref{secondTheo} by establishing an upper and lower bound. We will then finish this paper with some examples in the last section.

\begin{figure}
  \centering
  \setlength{\unitlength}{\textwidth/10}
  \begin{picture}(10,7)
  \put(6.3,3.63){\mbox{$\dim_{H}S^{\alpha}$}}
  \put(5.2,3.68){\dashbox{0.1}(1.0,0){}}
  \put(1.5,0){\includegraphics[width=7\unitlength]{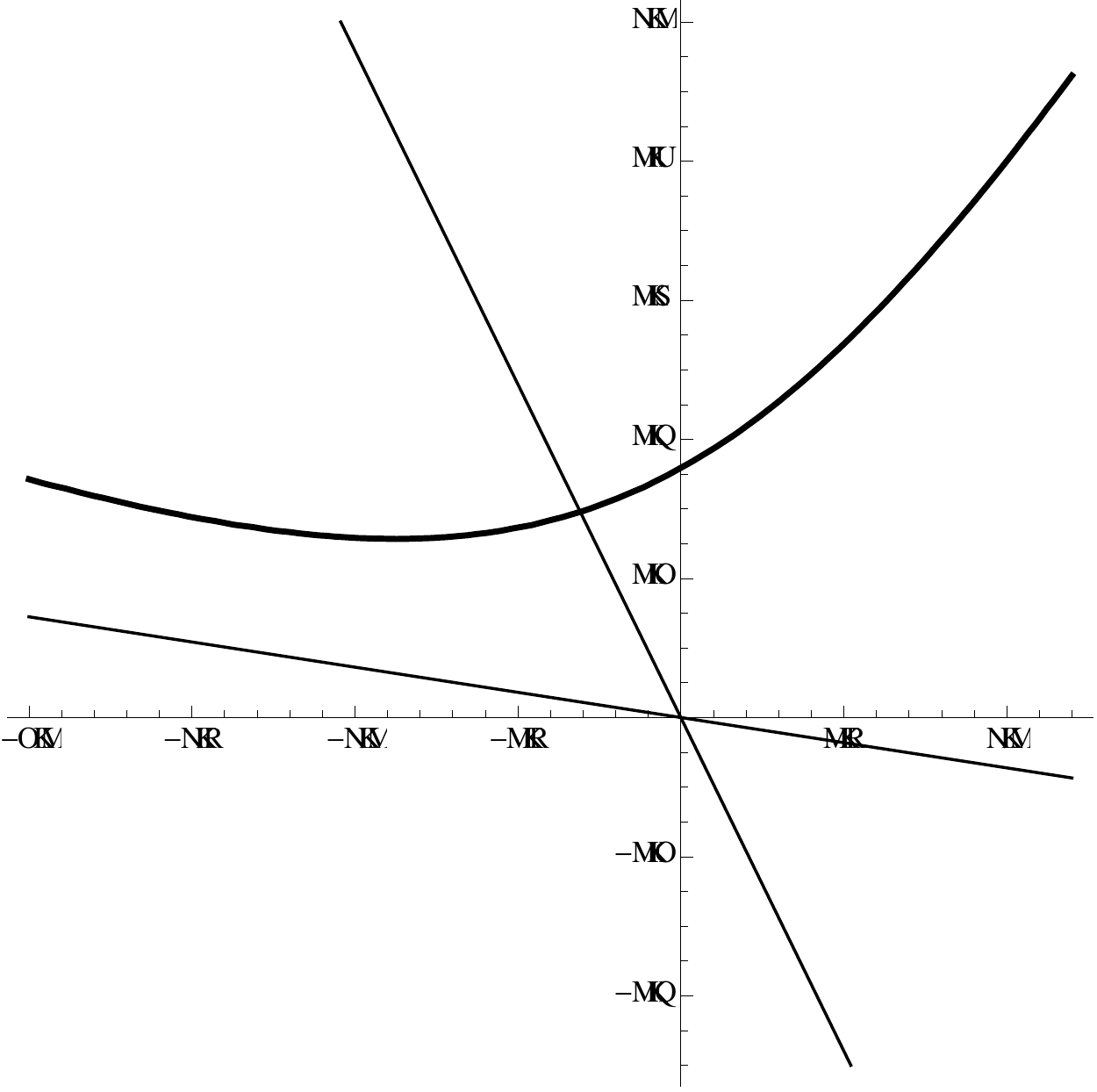}}
  \end{picture}
  \caption{Before phase transition, intersection gives $\dim_{H}S^{\alpha}$ for $a_{0}=0.2$, $a_{1}=0.1$, $p_{0}=0.89$, $p_{1}=0.11$ and $\alpha=0.8$.}
  \label{inter}
\end{figure}

\begin{figure}
  \centering
  \setlength{\unitlength}{\textwidth/10}
  \begin{picture}(10,7)
  \put(0.9,3.6){\mbox{$\dim_{H}S^{\alpha}=H(\alpha)$}}
  \put(3.0,3.65){\dashbox{0.1}(2,0){}}
  \put(1.5,0){\includegraphics[width=7\unitlength]{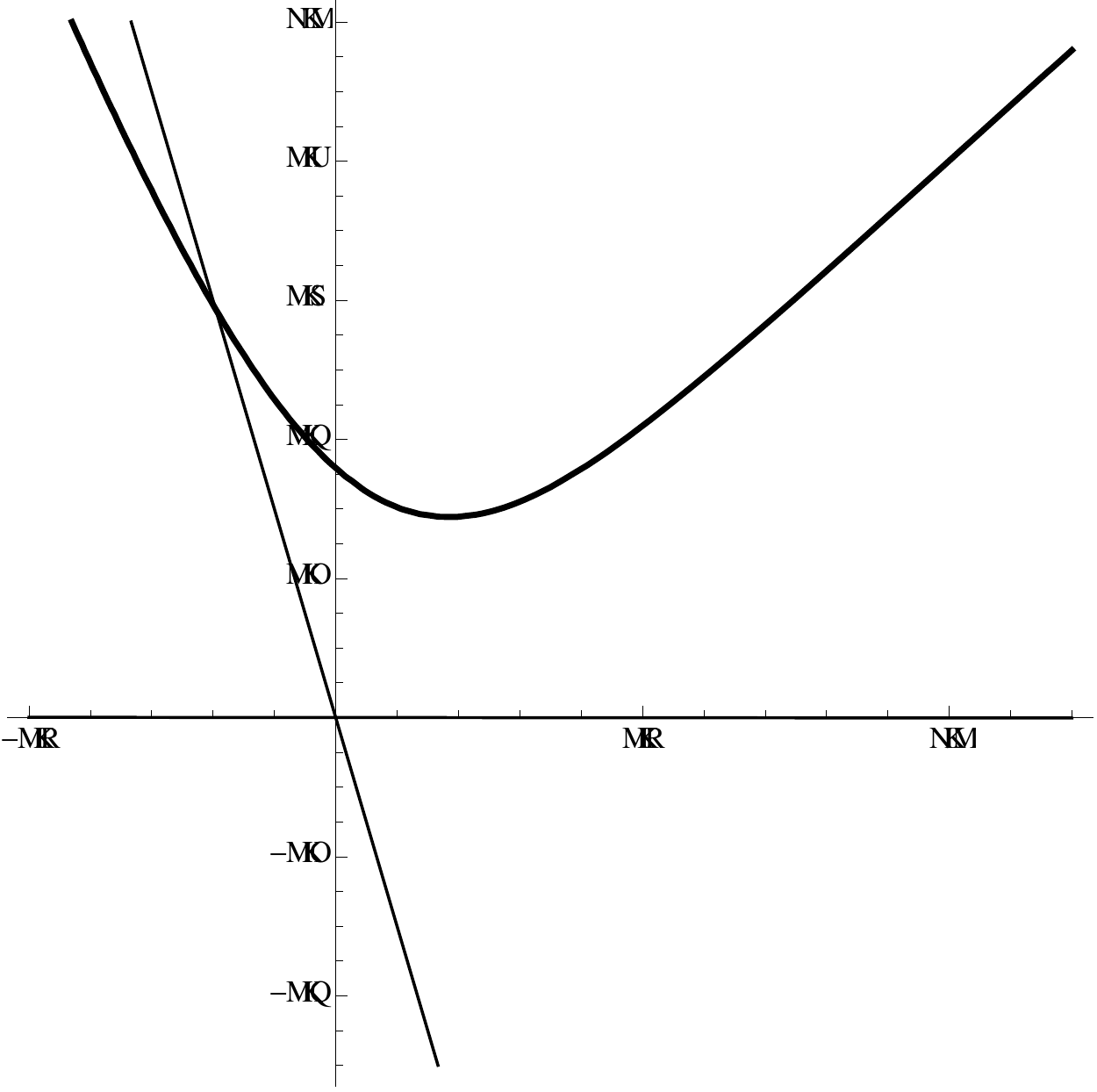}}
  \end{picture}
  \caption{After phase transition, minimum $\beta(t_{0})=H(\alpha)$ gives $\dim_{H}S^{\alpha}$ for $a_{0}=0.2$, $a_{1}=0.1$, $p_{0}=0.999$, $p_{1}=0.001$ and $\alpha=0.8$.}
  \label{min}
\end{figure}

\section{Thermodynamic Formalism and Proof of Theorem~\ref{firstTheo}}\label{thermoSect}
The results were established using thermodynamic multifractal formalism and we assume the reader is familiar with standard works such as \cite{PesinDimTheo,RuelleThermo,BowenBook}.

In additions to the definitions already given in the introduction we denote the Birkhoff sum as $S_{n}f(x):=\sum_{i=0}^{n-1}f(\sigma^{i}(x))$, with $\sigma$ representing the left shift map on the coding of $x\in E$. 
We call the coding space of $E$, $\Omega$ and represent finite codings as $[j_{1},j_{2},\hdots,j_{n}]$ for some finite $n$.
These finite codings represent cylinders and are treated as such in this paper.
We call $\Omega_{n}$ the set of all cylinders of (coding) length $n$.

The topological pressure is 
$$P(f):=\lim_{n\to\infty}\frac{1}{n}\log\sum_{\omega\in\Omega_{n}}\exp(S_{n}\alpha(\xi_{\omega}))$$ with $\xi_{\omega}$ being an arbitrary point in the cylinder $\omega$.

An iterated function system $\{f_{i}\}$ has the \emph{bounded distortion property} if for every $\omega\in\Omega_{n}$, $n\in\N$ and $x,y$ in the seed set we have
$$|f_{\omega}'(x)|\asymp|f_{\omega}'(y)|$$
where we write $f_{\omega}$ to mean $\hdots\circ f_{\omega(2)}\circ f_{\omega(1)}$ and $g\asymp h$ to mean $g/h$ and $h/g$ are bounded away from 0. Similarly we will use $g\prec h$ to mean $g/h$ is bounded above.

We are now able state a standard result (see e.g.\ \cite{TecFracGeo}) about the topological pressure.
\begin{Theo}\label{pressureLimit}
For all $\omega\in\Omega_{n}$ and $x\in X_{\omega}$ let $\phi(x)$ be a function that has the bounded variation property. Then $P(\phi)$ exists and does not depend on the point chosen in each cylinder. Furthermore there exists a Borel probability measure $\mu$, called the \emph{Gibbs} measure, on the limit set of the IFS and a number $a_{0}>0$ such that
$$a_{0}^{-1}\leq\frac{\mu(X_{\omega})}{\exp(-nP(\phi)+S_{n}\phi(x))}\leq a_{0}$$
for all $x\in X_{\omega}$ where $X$ is the seed set and we write $X_{\omega}=f_{\omega}(X)$.
\end{Theo}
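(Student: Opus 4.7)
The plan is to follow the standard Ruelle--Bowen proof of existence of topological pressure and the associated Gibbs measure. First I would record the consequence of the bounded variation hypothesis on $\phi$: there exists a constant $C>0$ such that $|S_n\phi(x)-S_n\phi(y)|\leq C$ whenever $x,y$ lie in the same cylinder $\omega\in\Omega_n$. This alone shows that the partition sum $Z_n(\phi):=\sum_{\omega\in\Omega_n}\exp(S_n\phi(\xi_\omega))$ depends on the choices of $\xi_\omega$ only up to a universal multiplicative constant, which yields the independence claim in the theorem.

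Next, to prove that the limit defining $P(\phi)$ exists, I would establish the near-submultiplicative estimate $Z_{n+m}(\phi)\leq C' Z_n(\phi) Z_m(\phi)$ by splitting any $(n+m)$-cylinder as a concatenation of cylinders of lengths $n$ and $m$, using the bounded variation property to split the Birkhoff sum $S_{n+m}\phi$ additively up to a bounded error. A Fekete-type lemma for nearly submultiplicative sequences then supplies the limit $P(\phi)=\lim_n \tfrac{1}{n}\log Z_n(\phi)$, independent of all the $\xi_\omega$.

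For the Gibbs measure itself I would pass to the Ruelle--Perron--Frobenius transfer operator $L_\phi f(x):=\sum_{\sigma y=x}e^{\phi(y)}f(y)$. The central ingredient, and in my view the main technical obstacle, is a Perron--Frobenius theorem producing a strictly positive, H\"older-continuous eigenfunction $h$ with $L_\phi h=e^{P(\phi)}h$, together with a Borel probability eigenmeasure $\nu$ satisfying $L_\phi^{\ast}\nu=e^{P(\phi)}\nu$ and the normalisation $\int h\,d\nu=1$. The usual route is to show that $L_\phi$ strictly contracts a cone of log-H\"older-bounded functions in the projective Hilbert metric (Birkhoff's theorem), or alternatively to invoke a Schauder--Tychonoff fixed-point argument in the dual space of probability measures. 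The candidate Gibbs measure is then $\mu:=h\nu$.

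Finally, to check the Gibbs bound one computes, for $\omega\in\Omega_n$, $\mu(X_\omega)=e^{-nP(\phi)}\int L_\phi^{n}(\mathbf{1}_{X_\omega}h)\,d\nu$ and expands $L_\phi^{n}$ using the cylinder structure of preimages under $\sigma^n$. The bounded variation of $\phi$ lets one pull the factor $\exp(S_n\phi(\xi_\omega))$ outside the integral up to a multiplicative constant, while the two-sided bounds on the H\"older function $h$ and the normalisation of $\nu$ absorb the remaining constants into a single $a_0>0$. Once the spectral analysis of $L_\phi$ has been carried out the Gibbs estimate is essentially bookkeeping; the entire difficulty of the theorem is concentrated in the cone-contraction / spectral step.
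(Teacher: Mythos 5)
Your outline is a correct and standard route to this result; note, though, that the paper does not actually prove Theorem~\ref{pressureLimit} at all --- it is stated as a known result with a citation to \cite{TecFracGeo}, so there is no in-paper argument to match yours against. Two comparative remarks are worth making. First, you locate the entire difficulty in the cone-contraction (or spectral) step producing the positive eigenfunction $h$ with $L_{\phi}h=e^{P(\phi)}h$; but that step is only needed to make the Gibbs measure shift-invariant, which the statement does not ask for. Under the bounded variation hypothesis alone, the eigenmeasure $\nu$ obtained from a Schauder--Tychonoff fixed point of $\mu\mapsto L_{\phi}^{\ast}\mu/(L_{\phi}^{\ast}\mu)(\mathbf{1})$ already does everything: from $L_{\phi}^{\ast}\nu=\lambda\nu$ one gets $\nu(X_{\omega})=\lambda^{-n}\int e^{S_{n}\phi(f_{\omega}(x))}\,d\nu(x)\asymp\lambda^{-n}e^{S_{n}\phi(\xi_{\omega})}$ by bounded distortion, and summing over $\omega\in\Omega_{n}$ forces $\lambda^{n}\asymp Z_{n}(\phi)$, hence $\lambda=e^{P(\phi)}$ and the Gibbs bound simultaneously. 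This matters because the cone-contraction argument for $h$ genuinely uses H\"older regularity (or summable variations) rather than mere bounded variation, so as written your plan proves slightly less than it could under hypotheses slightly stronger than those stated; dropping the eigenfunction removes both issues. Second, the cited source constructs the Gibbs measure even more elementarily, as a weak-$\ast$ limit of measures assigning mass proportional to $\exp(S_{n}\phi(\xi_{\omega}))/Z_{n}(\phi)$ to each $n$-cylinder, with the Gibbs inequality extracted from the two-sided quasi-multiplicativity $Z_{n+m}\asymp Z_{n}Z_{m}$ (the lower bound being automatic for a full shift); that avoids the transfer operator entirely. Your Fekete argument for the existence of $P(\phi)$ and your final bookkeeping for the Gibbs bound are both correct as stated.
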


We consider an IFS given by a finite family of conformal contractions $\{f_{1},\hdots,f_{n}\}$ which satisfy the strong separation condition, that is for $i\neq j$ with $i,j\in J$ we have $f_{i}(X)\cap f_{j}(X)=\varnothing$. We also require the $f_{i}$ to satisfy the H\"older condition in that there exists $\epsilon>0$ such that $f_{i}:\R\to\R$ is a strict contraction and in $C^{1+\epsilon}$.

Like the Lipschitz condition, the H\"older condition implies that $f_{i}$ has the bounded distortion property.

We consider Gibbs measures induced by a potential function $\psi$. This Gibbs measure must exist by Theorem~\ref{pressureLimit} as long as $\psi$ is H\"older-continuous. We will refer to one such potential function in particular. This is the geometric potential $\varphi(x)=\log f_{i}'(f_{i}^{-1}(x))$ for $x\in f_{i}(E)$ but for all other potential functions we require:
\begin{enumerate}
\item $P(\psi)=0$
\item $\psi<0$
\item $\psi$ is H\"older continuous
\end{enumerate}
Note that the first two conditions are for convenience and one could consider more general potential functions $\psi$. This analysis would then have to consider the potential function $\psi^{*}=\psi-P(\psi)$, as we necessarily have $P(\psi^{*})=0$.

Let $S^{\alpha}$ denote the subset of $E$ where there is no $\alpha$-H\"older derivative in the general sense. That is $$\lim_{y\to x}\frac{|F(x)-F(y)|}{|x-y|^{\alpha}}$$ is neither finite nor infinite. Similarly let $S_{0}^{\alpha}$ and $S_{\infty}^{\alpha}$ be the sets where the limit is $0$ and $\infty$, respectively.
In the derivation we may at times ignore the endpoint of intervals which are countable and have thus no relevance to the Hausdorff dimension of $S_{0}^{\alpha},S_{\infty}^{\alpha}$ and $S^{\alpha}$.

\subsection{Derivative and local dimension}
As mentioned before most of the previous research focussed on cases where $\alpha\varphi<\psi$ and here we will present a proof of Theorem~\ref{firstTheo} by considering the local dimension of points in $E$ and proving some relations between differentiability and local dimension. For the Hausdorff dimension of the sets $K$, we have yet to define, we will mostly rely on a theorem by Yakov Pesin and Howard Weiss and a Corollary to their work which we shall briefly prove.

We start by defining the upper and lower pointwise (or local) dimension of measure $\mu_{\psi}$ at point $x$ as usual by
$$\overline d_{x}:=\limsup_{r\to0}\frac{\log\mu_{\psi}(B(x,r))}{\log r}$$
and 
$$\underline d_{x}:=\liminf_{r\to0}\frac{\log\mu_{\psi}(B(x,r))}{\log r}$$
Now define the potential $\varphi_{q}(x):=T(q)\varphi(x)+q\psi(x)$, where $T(q)$ is chosen such that $P(\varphi_{q})=0$. We also introduce the sets 
$$K_{\gamma(q)}:=\{x\in E\,;\, d_{x}=\gamma(q)\}$$
$$K_{>\gamma(q)}:=\{x\in E\,;\,\underline d_{x}>\gamma(q)\}$$
$$K_{\geq\gamma(q)}:=\{x\in E\,;\,\underline d_{x}\geq\gamma(q)\}$$
$$K_{<\gamma(q)}:=\{x\in E\,;\,\overline d_{x}<\gamma(q)\}$$
$$K_{\leq\gamma(q)}:=\{x\in E\,;\,\overline d_{x}\leq\gamma(q)\}$$
where $\gamma(q)$ is the local dimension associated with $q$ and $\gamma(q):=-T'(q)$.
Pesin and Weiss established the fractal spectrum and proved the following theorem (see \cite{Pesin97})
\begin{Theo}\label{mainPesinTheo}
For the functions as defined above we have for the fractal spectrum of the local dimension with respect to the measure $\mu_{\psi}$
$$\dim_{H}K_{\gamma(q)}=H(\gamma(q))=T(q)+q\gamma(q)$$
Furthermore $T(q)$ is real analytic for all $q\in\R$, $T(0)=\dim_{H}E=\delta$, $T(1)=0$, $\mu_{\varphi_{q}}(K_{\gamma(q)})=1$ and if $\mu_{\psi}$ is not equal to the Gibbs measure induced by the geometric potential, $H(\alpha)$ and $T(q)$ are strictly convex and $H(\alpha)$ has maximum at $q=0$. 
\end{Theo}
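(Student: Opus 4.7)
The plan is to follow the standard Pesin--Weiss multifractal formalism for conformal IFS. The bounded distortion property together with the strong separation condition gives, for $\omega\in\Omega_n$ and $x\in X_\omega$, the comparisons $|X_\omega|\asymp\exp(S_n\varphi(x))$ and $\mu_\psi(X_\omega)\asymp\exp(S_n\psi(x))$, and a ball $B(x,r)$ can be sandwiched between cylinders of comparable diameter. Hence the local dimension at $x$ is controlled by the asymptotic ratio $S_n\psi(x)/S_n\varphi(x)$ along the coding of $x$. Next, $T(q)$ is defined implicitly by $P(T(q)\varphi+q\psi)=0$; since the pressure is real analytic on the cone of H\"older potentials and its derivative in the direction of $\varphi$ equals $\int\varphi\,d\mu_{\varphi_q}<0$, the implicit function theorem yields that $T$ is real analytic. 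Differentiating and using $\partial_s P(\phi+s\chi)|_{s=0}=\int\chi\,d\mu_\phi$ gives
$$\gamma(q)=-T'(q)=\frac{\int\psi\,d\mu_{\varphi_q}}{\int\varphi\,d\mu_{\varphi_q}}.$$

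Applying the Birkhoff ergodic theorem to the ergodic Gibbs measure $\mu_{\varphi_q}$ shows that for $\mu_{\varphi_q}$-almost every $x$ one has $S_n\psi(x)/S_n\varphi(x)\to\gamma(q)$, so $d_x=\gamma(q)$ a.s., proving $\mu_{\varphi_q}(K_{\gamma(q)})=1$. For the Hausdorff dimension, the Gibbs property rewrites as $\mu_{\varphi_q}(X_\omega)\asymp|X_\omega|^{T(q)}\mu_\psi(X_\omega)^q$, which for $\mu_{\varphi_q}$-typical $x\in K_{\gamma(q)}$ yields $\mu_{\varphi_q}(B(x,r))\preceq r^{T(q)+q\gamma(q)}$ at all sufficiently small scales; the mass distribution principle then gives the lower bound $\dim_H K_{\gamma(q)}\geq H(\gamma(q))$. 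The matching upper bound is obtained by covering $K_{\gamma(q)}$ by cylinders on which the Birkhoff averages $\tfrac1n S_n\varphi$ and $\tfrac1n S_n\psi$ are $\epsilon$-close to their $\mu_{\varphi_q}$-integrals, using a large-deviation estimate to control the cost, and letting $\epsilon\to 0$.

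The boundary values follow from the pressure equation: $P(T(0)\varphi)=0$ is Bowen's formula, forcing $T(0)=\delta$, while $P(T(1)\varphi+\psi)=0=P(\psi)$ together with monotonicity of $s\mapsto P(s\varphi+\psi)$ forces $T(1)=0$. For the strict convexity claim, the second derivative of pressure along a one-parameter family is the asymptotic variance of the potential with respect to its equilibrium state. Provided $\psi$ is not cohomologous to a scalar multiple of $\varphi$ (equivalently, $\mu_\psi$ is not the geometric Gibbs measure), the variance $\sigma_q^2(\psi+T'(q)\varphi)$ is strictly positive for every $q$, yielding strict convexity of $T$; the function $H$ is the Legendre transform of $T$ and inherits strict concavity, with maximum at $q=0$ because $H'(\gamma(q))=q$.

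The main technical obstacle is the Hausdorff dimension upper bound on $K_{\gamma(q)}$. A naive covering by $n$-cylinders cannot be uniform in $n$, so one must pass to a countable exhaustion by level sets on which the Birkhoff ratios are quantitatively close to $\gamma(q)$, and combine the Gibbs estimate with a uniform exponential large-deviation rate across all cylinder generations. Everything else is bookkeeping in standard thermodynamic formalism, which is why the result is quoted from \cite{Pesin97} rather than reproved here.
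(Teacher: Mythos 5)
The paper does not actually prove this theorem: it is quoted from Pesin and Weiss \cite{Pesin97}, so there is no in-paper argument to compare against. Your sketch is a correct reconstruction of the standard thermodynamic proof, and the main steps are all right: analyticity of $T$ via the implicit function theorem (using $\partial_{T}P=\int\varphi\,d\mu_{\varphi_{q}}<0$), the formula $\gamma(q)=\int\psi\,d\mu_{\varphi_{q}}/\int\varphi\,d\mu_{\varphi_{q}}$, the Birkhoff ergodic theorem giving $\mu_{\varphi_{q}}(K_{\gamma(q)})=1$, the Gibbs identity $\mu_{\varphi_{q}}(X_{\omega})\asymp|X_{\omega}|^{T(q)}\mu_{\psi}(X_{\omega})^{q}$ feeding the mass distribution principle for the lower bound, and the boundary values $T(0)=\delta$, $T(1)=0$ from Bowen's equation and monotonicity of $s\mapsto P(s\varphi+\psi)$. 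Two remarks. First, the upper bound is less of an obstacle than you make it: the Gibbs identity holds at \emph{every} $x\in K_{\gamma(q)}$, not just $\mu_{\varphi_{q}}$-typical ones, so $\underline d^{\varphi_{q}}_{x}=T(q)+q\gamma(q)$ pointwise on $K_{\gamma(q)}$ and the Billingsley-type lemma ($\dim_{H}\{x\,;\,\underline d^{\mu}_{x}\leq s\}\leq s$) finishes without any large-deviation or exhaustion machinery; this is essentially the mechanism the paper itself uses (via upper local dimension and packing dimension) in the proof of Corollary~\ref{PesinCor}. Second, the degeneracy condition needs a touch more care: $T''(q)=0$ corresponds to $\psi+T'(q)\varphi$ being cohomologous to a \emph{constant}, i.e.\ $\psi\sim s\varphi+c$ rather than $\psi\sim s\varphi$, and one should note that for $T$ to be affine this must hold with $\gamma(q)$ constant, which together with $P(\psi)=0$ forces $c=0$ and $s=\delta$, i.e.\ $\mu_{\psi}=\mu_{\delta\varphi}$. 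Finally, $H$ is strictly \emph{concave} as a function of $\gamma$ (the theorem's wording ``convex'' is a slip of the paper), which your Legendre-transform remark in fact gets right.
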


Let now $\omega_{n}$ and $\omega(n)$ refer to the $n$-th coding of the point $\omega\in\Omega$. By the strong separation condition we have for all measures considered in this paper that $\mu(B(\omega,r))\asymp \mu([\omega_{1},\hdots,\omega_{n}])$ for some length $n$. We can therefore equivalently denote the local dimension as
$$d_{x}=\lim_{n\to \infty}\frac{\log\mu_{\psi}([x_{1},\hdots,x_{n}])}{\log|[x_{1},\hdots,x_{n}]|}$$
if the limit exists and analogously refer to the upper and lower pointwise dimension by $\overline d_{x}$ and $\underline d_{x}$ taking the $\limsup$ and $\liminf$, respectively. This is due to the following well known result which allows us to focus on symbolic codings instead of their image when mapped into $\R$.
\begin{Lemma}\label{symbolicequi}
For a self-conformal IFS in which the strong separation condition holds we have that 
$$d_{x}=\lim_{n\to \infty}\frac{\log\mu_{\psi}([x_{1},\hdots,x_{n}])}{\log|[x_{1},\hdots,x_{n}]|}$$
is equivalent to the usual definition of pointwise dimension. Furthermore the different definitions of upper and lower local dimension coincide as well.
\end{Lemma}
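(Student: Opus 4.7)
The plan is to compare, for each $x\in E$ with coding $\omega$ and each sufficiently small $r>0$, the ball measure $\mu_\psi(B(x,r))$ with the measure of an appropriate cylinder along $\omega$, and to show that they differ only by multiplicative constants that wash out when we take logarithms and pass to the limit.

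First I would establish a geometric separation lemma: there exists a constant $C>0$, depending only on the IFS, such that for every finite word $\omega\in\Omega_n$ and every $y\in E\setminus X_\omega$ one has
$$d(y,X_\omega)\;\geq\;C\cdot\diam(X_\omega).$$
To prove this, let $k<n$ be the length of the longest common prefix of $\omega$ and the coding of $y$; then $X_\omega$ and $y$ lie in distinct children of the level-$k$ cylinder $X_{\omega|_k}$. The strong separation condition gives a distance $\eta_0>0$ between the first-level cylinders $\{f_i(X)\}$ in the seed set, and applying $f_{\omega|_k}$ — a conformal composition with the bounded distortion property — the distance between the two sibling children inside $X_{\omega|_k}$ is at least $\eta_0\cdot|f_{\omega|_k}'|\asymp\diam(X_{\omega|_k})$. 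Since $X_\omega\subseteq X_{\omega|_k}$, the claim follows, absorbing the relevant constants into a single $C$.

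Second, given $r>0$ I would choose $n=n(r)$ to be the unique integer with $\diam(X_{\omega|_n})\leq r<\diam(X_{\omega|_{n-1}})$, which exists because diameters shrink geometrically. Since $x\in X_{\omega|_n}$, the cylinder lies in $\overline{B(x,r)}$, so $\mu_\psi([\omega|_n])\leq\mu_\psi(B(x,r))$ and $|[\omega|_n]|\leq r$. For the reverse, let $m=m(r)$ be the largest integer satisfying $r\leq C\cdot\diam(X_{\omega|_m})$. By the separation lemma, any $y\in E\setminus X_{\omega|_m}$ is at distance at least $r$ from $X_{\omega|_m}\ni x$, hence from $x$, so $B(x,r)\cap E\subseteq X_{\omega|_m}$ and $\mu_\psi(B(x,r))\leq\mu_\psi([\omega|_m])$. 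Because the contraction ratios are bounded uniformly away from $0$ and $1$, the difference $n-m$ is bounded by a constant $K$ depending only on the IFS, and $|[\omega|_m]|\asymp r\asymp|[\omega|_n]|$.

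Finally, Theorem~\ref{pressureLimit} together with bounded distortion gives $\mu_\psi([\omega|_n])\asymp\exp(S_n\psi(x))$ and similarly for $m$, so $\mu_\psi([\omega|_n])/\mu_\psi([\omega|_m])$ is bounded above and below by $\exp(\pm K\|\psi\|_\infty)$ times a distortion constant. Thus $\mu_\psi(B(x,r))\asymp\mu_\psi([\omega|_n])$ with constants independent of $r$ and $x$. Taking logarithms and dividing by $\log r$ (or by $\log|[\omega|_n]|$, also comparable up to bounded additive error via $\log(r_{\min}^{-K})$), the bounded multiplicative factors vanish in the limit, so $\limsup$, $\liminf$ and $\lim$ of $\log\mu_\psi(B(x,r))/\log r$ as $r\to 0$ agree with the corresponding cylinder quantities as $n\to\infty$. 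The only genuinely delicate point is the separation lemma, where one must simultaneously invoke strong separation at level $k+1$ and bounded distortion of the level-$k$ composition to obtain a lower bound proportional to $\diam(X_\omega)$ rather than to that of a possibly much larger ancestor.
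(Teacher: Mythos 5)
Your argument is correct. Note, though, that the paper does not actually prove this lemma: it simply cites the covering theorem of Pesin and Weiss, remarking that their separation hypothesis is weaker than the strong separation condition assumed here. Your proof is therefore a genuinely different (and more self-contained) route. Its key point is exactly the one you flag as delicate: under strong separation, a point $y\in E\setminus X_{\omega}$ branches off from $\omega$ at some level $k$, so $y$ and $X_{\omega}$ sit in distinct children of $X_{\omega|_{k}}$, and strong separation at the first level pushed forward by $f_{\omega|_{k}}$ with bounded distortion gives $d(y,X_{\omega})\geq C\diam(X_{\omega|_{k}})\geq C\diam(X_{\omega})$; this yields the two-sided sandwich $X_{\omega|_{n}}\subseteq \overline{B(x,r)}$ and $B(x,r)\cap E\subseteq X_{\omega|_{m}}$ with $n-m$ uniformly bounded, after which the Gibbs property and boundedness of $\psi$ make all constants disappear in the logarithmic limit. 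What your approach buys is an elementary, fully explicit proof that uses the strong separation condition in an essential way; what the Pesin--Weiss route buys is greater generality, since their covering argument survives under weaker separation hypotheses (where a ball need not be trapped inside a single cylinder and one must instead control the number of comparable cylinders meeting it). Two cosmetic points: the containment $X_{\omega|_{n}}\subseteq\overline{B(x,r)}$ gives a bound by the measure of the closed ball, which is harmless since $\mu_{\psi}(\overline{B(x,r)})\leq\mu_{\psi}(B(x,2r))$ and the factor $2$ vanishes in the limit; and you should say explicitly that $|\log r-\log|[\omega|_{n}]||$ is bounded (not merely that the lengths are comparable) so that dividing by either denominator gives the same $\limsup$ and $\liminf$.
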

This is due to a covering theorem by Pesin and Weiss and their separation condition used in \cite{Pesin97} being weaker then our strong separation condition.

We will now state and briefly proof a corollary to the result by Pesin and Weiss, which will turn out to be convenient in finding the Hausdorff dimension of $S_{0}^{\alpha}$ and $S_{\infty}^{\alpha}$.
\begin{Cor}\label{PesinCor}
Let $q$ be given, and assume we do not have the trivial case where $\gamma(t)$ is constant. Then for $q>0$
 \begin{eqnarray}
 \dim_{H}K_{<\gamma(q)}=&\dim_{H}K_{\leq\gamma(q)}&=T(q)+q\gamma(q)\label{Cor1a}\\
 \dim_{H}K_{>\gamma(q)}=&\dim_{H}K_{\geq\gamma(q)}&=T(0)=\delta\label{Cor1b}
 \end{eqnarray}
 and for $q<0$
 \begin{eqnarray}
 \dim_{H}K_{<\gamma(q)}&=&\dim_{H}K_{\leq\gamma(q)}=T(0)=\delta\\
 \dim_{H}K_{>\gamma(q)}&=&\dim_{H}K_{\geq\gamma(q)}=T(q)+q\gamma(q)
 \end{eqnarray}
\end{Cor}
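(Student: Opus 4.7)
The plan is to split the argument into lower bounds (which follow directly from Theorem~\ref{mainPesinTheo}) and upper bounds, which I would obtain by transferring the local-dimension condition from $\mu_{\psi}$ to the auxiliary Gibbs measure $\mu_{q}:=\mu_{\varphi_{q}}$ and then invoking a standard Billingsley-type principle.

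For the lower bounds: once the trivial constant-$\gamma$ case is excluded, strict convexity of $T$ (Theorem~\ref{mainPesinTheo}) makes $\gamma(q)=-T'(q)$ strictly decreasing, so for $q>0$ one has $\gamma(q)<\gamma(0)$ together with the obvious inclusions
$$K_{\gamma(q)}\subset K_{\leq\gamma(q)},\qquad \bigcup_{q'>q}K_{\gamma(q')}\subset K_{<\gamma(q)},\qquad K_{\gamma(0)}\subset K_{>\gamma(q)}\subset K_{\geq\gamma(q)}.$$
Applying Theorem~\ref{mainPesinTheo} to the left-hand sets in these inclusions, combined with continuity of $H\circ\gamma$ as $q'\to q^{+}$, delivers $\dim_{H}K_{\leq\gamma(q)},\dim_{H}K_{<\gamma(q)}\geq H(\gamma(q))$ and $\dim_{H}K_{\geq\gamma(q)},\dim_{H}K_{>\gamma(q)}\geq T(0)=\delta$; the trivial bound $\dim_{H}K_{\geq\gamma(q)}\leq\dim_{H}E=\delta$ then completes \eqref{Cor1b}.

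The main task is therefore the matching upper bound $\dim_{H}K_{\leq\gamma(q)}\leq H(\gamma(q))$. The key tool is the Gibbs-measure comparison
$$\mu_{q}(B(x,r))\asymp r^{T(q)}\,\mu_{\psi}(B(x,r))^{q},$$
obtained by applying Theorem~\ref{pressureLimit} to $\varphi_{q}$ (using $P(\varphi_{q})=0$) and passing between balls and cylinders via Lemma~\ref{symbolicequi}. Taking $\log$ and dividing by $\log r$, the bounded-distortion and Gibbs constants contribute an $O(1/\log r)$ error which vanishes in the limit, yielding the exact identity $\overline{d}_{\mu_{q}}(x)=T(q)+q\,\overline{d}_{\mu_{\psi}}(x)$ for $q>0$. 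Hence on $K_{\leq\gamma(q)}$ we have $\overline{d}_{\mu_{q}}(x)\leq T(q)+q\gamma(q)=H(\gamma(q))$. Since Theorem~\ref{mainPesinTheo} also gives $\mu_{q}(K_{\gamma(q)})=1$ and $K_{\gamma(q)}\subset K_{\leq\gamma(q)}$, the set $K_{\leq\gamma(q)}$ carries full $\mu_{q}$-mass, so the standard Billingsley--Frostman principle (e.g.\ Proposition~2.3 of \cite{TecFracGeo}) gives $\dim_{H}K_{\leq\gamma(q)}\leq H(\gamma(q))$; combined with $K_{<\gamma(q)}\subset K_{\leq\gamma(q)}$, this completes \eqref{Cor1a}.

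For $q<0$ I would argue symmetrically: the identity $\limsup(a+qb)=a+q\liminf b$ now forces $\overline{d}_{\mu_{q}}(x)=T(q)+q\,\underline{d}_{\mu_{\psi}}(x)$, so it is $K_{\geq\gamma(q)}$ that carries full $\mu_{q}$-mass and on which $\overline{d}_{\mu_{q}}\leq H(\gamma(q))$, and the remaining identities follow by the same three-step pattern. The one genuine subtlety throughout is the transfer of local dimensions via the Gibbs relation: it requires the strong separation condition so that balls can be replaced by cylinders without losing the uniform constants, and it uses bounded distortion to absorb the boundary errors into the vanishing $O(1/\log r)$ term.
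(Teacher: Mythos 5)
Your proposal is correct and follows essentially the same route as the paper: lower bounds from the inclusions $K_{\gamma(0)}\subseteq K_{>\gamma(q)}$ and $K_{\gamma(q+\epsilon)}\subseteq K_{<\gamma(q)}$ together with Theorem~\ref{mainPesinTheo}, and the upper bound by estimating the upper local dimension of $\mu_{\varphi_{q}}$ on $K_{\leq\gamma(q)}$ (resp.\ $K_{\geq\gamma(q)}$ for $q<0$) via the Gibbs property. The only cosmetic difference is that you package the key estimate as the identity $\overline{d}_{\mu_{q}}=T(q)+q\,\overline{d}_{\mu_{\psi}}$ and invoke the Billingsley--Frostman principle directly, whereas the paper phrases the same bound with an $\epsilon$-perturbation of $\gamma$ and passes through the packing dimension before concluding $\dim_{H}\leq\dim_{P}\leq T(q)+q\gamma(q)$.
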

\begin{proof}
Let $q>0$, then $\gamma(q)<\gamma(0)$. Clearly $K_{\gamma(0)}\subseteq K_{>\gamma(q)} \subseteq K_{\geq\gamma(q)}$ and so as $\dim_{H}K_{\gamma(0)}=\delta$ and $\dim_{H}E=\delta$, we find that (\ref{Cor1b}) must follow. Similarly the lower bound of  $\dim_{H}K_{<\gamma(q)}$ and $\dim_{H}K_{\leq\gamma(q)}$ can be established by noting that $K_{\gamma(q+\epsilon)}\subseteq K_{<\gamma(q)}\subseteq K_{\leq\gamma(q)}$ for arbitrarily small $\epsilon>0$ and $\dim_{H}K_{\gamma(q+\epsilon)}=T(q+\epsilon)+(q+\epsilon)\gamma(q+\epsilon)$. The upper bound follows by  Lemma~\ref{symbolicequi} as it implies that for all points with symbolic coding in  $\dim_{H}K_{\leq\gamma(q)}$ also have upper local dimension with respect to the measure $\mu_{\psi}$ less than or equal to $\gamma(q)$. Now consider the measure $\mu_{\varphi_{q}}$. By Theorem~\ref{mainPesinTheo} it is obvious that $\mu_{\varphi_{q}}(K_{\leq\gamma(q)})\asymp 1$. The upper local dimension with respect to this measure, $\overline d^{\varphi_{q}}_{x}$ is
$$\overline d^{\varphi_{q}}_{x}=\limsup_{r\to 0}\frac{\log\mu_{\varphi_{q}}(B(x,r))}{\log r}=\limsup_{n\to \infty}\frac{\log\mu_{\varphi_{q}}([x_{1},\hdots,x_{n}])}{\log|[x_{1},\hdots,x_{n}]|}$$
Now 
$$\mu_{\varphi_{q}}([x_{1},\hdots,x_{n}])\asymp e^{S_{n}T(q)\varphi(x)+q\psi(x)}\prec e^{(T(q)+q\gamma(q-\epsilon))S_{n}\varphi(x)}$$ 
as $d_{x}<\gamma(q-\epsilon)$ for all $x\in K_{\leq\gamma(q)}$ and thus as $\epsilon>0$ is arbitrarily small we have for all such $x$
$$\overline d^{\varphi_{q}}_{x}\leq T(q)+q\gamma(q)$$
Now as $\mu_{\varphi_{q}}$ is finite on $K_{\leq\gamma(q)}$ this then implies that $\dim_{P}K_{\leq\gamma(q)}\leq T(q)+q\gamma(q)$ (see e.g.\ \cite{TecFracGeo}) where $\dim_{P}$ is the packing measure and thus $\dim_{H}K_{\leq\gamma(q)}\leq T(q)+q\gamma(q)$. As upper and lower bound coincide we have the required result (\ref{Cor1a}). The case $q<0$ is proven similarly and left to the reader.
\end{proof}

Note that due to our assumptions there exists an integer $\zeta$ independent of $n$ such that any ball in the cylinder $[\omega_{1},\hdots,\omega_{n}]$ with diameter $|[\omega_{1},\hdots,\omega_{n}]|$ is wholly contained in the cylinder $[\omega_{1},\hdots,\omega_{n-\zeta}]$. This immediately implies
\begin{Lemma}\label{liminfLemmaThermo}
For $\omega\in\Omega$ we have
$$\overline d_{\omega}>\alpha\Rightarrow \liminf_{r\to 0^{\pm}}\frac{\mu_{\psi}([\omega,\omega+r])}{r^{\alpha}}=0$$
and
$$\underline d_{\omega}>\alpha\Rightarrow \lim_{r\to 0^{\pm}}\frac{\mu_{\psi}([\omega,\omega+r])}{r^{\alpha}}=0$$
with $0^{\pm}$ meaning the results holds from the left and the right.
\end{Lemma}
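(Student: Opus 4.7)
The plan is to reduce the lemma directly to the ball-based definitions of upper and lower pointwise dimension by way of the elementary containment $[\omega,\omega+r]\subseteq\overline B(\omega,|r|)$. Every point of the one-sided interval lies within distance $|r|$ of its endpoint $\omega$, so monotonicity of $\mu_\psi$ gives
$$\mu_\psi([\omega,\omega+r])\leq\mu_\psi(\overline B(\omega,|r|))$$
for both $r>0$ and $r<0$. This is the only genuinely geometric input; the rest is unpacking limits.

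For the first implication, I would unpack $\overline d_\omega>\alpha$ straight from the definition. Since $\limsup_{r\to 0}\log\mu_\psi(B(\omega,r))/\log r>\alpha$, there exist $\epsilon>0$ and a sequence $r_n\to 0$ with $\log\mu_\psi(\overline B(\omega,r_n))\leq(\alpha+\epsilon)\log r_n$ for all large $n$ (the inequality flips because $\log r_n<0$). Exponentiating and dividing by $|r_n|^\alpha$ yields $\mu_\psi(\overline B(\omega,|r_n|))/|r_n|^\alpha\leq|r_n|^\epsilon$. Combined with the containment above, this produces a subsequence along which $\mu_\psi([\omega,\omega+r_n])/|r_n|^\alpha\to 0$, which forces the liminf of the ratio to be $0$ (as the ratio is non-negative). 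The two-sided notation $0^{\pm}$ is handled uniformly by working with $|r|$ throughout and choosing $r_n$ of either sign.

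For the second implication, $\underline d_\omega>\alpha$ is the same condition with $\limsup$ replaced by $\liminf$, and it upgrades the preceding estimate from a subsequence to every sufficiently small radius: for some $\epsilon>0$ and all $r$ with $|r|$ small, $\mu_\psi(\overline B(\omega,|r|))\leq|r|^{\alpha+\epsilon}$. Dividing by $|r|^\alpha$ and invoking the containment gives $\mu_\psi([\omega,\omega+r])/|r|^\alpha\leq|r|^\epsilon\to 0$, so the full limit, not merely a liminf, vanishes.

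I expect no real obstacle. The substantial work is already done: Lemma~\ref{symbolicequi} together with the $\zeta$-cylinder covering remark ensures that the ball-based pointwise dimensions $\overline d_\omega,\underline d_\omega$ coincide with their cylinder-based counterparts, so the hypotheses translate cleanly into the required pointwise estimates on $\mu_\psi(\overline B(\omega,r))$. The lemma is essentially the packaging of that equivalence into a form suited for the subsequent comparison of the $F_\psi$-increment $F_\psi(\omega+r)-F_\psi(\omega)=\mu_\psi([\omega,\omega+r])$ with $|r|^\alpha$.
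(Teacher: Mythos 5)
Your proof is correct and is essentially the argument the paper treats as immediate: the one-sided interval $[\omega,\omega+r]$ sits inside the ball of radius $|r|$, so the hypothesis $\overline d_\omega>\alpha$ (resp.\ $\underline d_\omega>\alpha$) gives $\mu_\psi(B(\omega,|r_n|))\leq |r_n|^{\alpha+\epsilon}$ along a sequence (resp.\ for all small $|r|$), which kills the ratio against $|r|^\alpha$. If anything you are slightly over-crediting the cylinder machinery in your closing paragraph --- for this upper-bound direction the naive containment in the ball suffices and neither Lemma~\ref{symbolicequi} nor the $\zeta$-cylinder remark is actually needed; they matter for the reverse comparison used in Lemma~\ref{limsupLemmaThermo}.
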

A similar result gives us a connection between the lower pointwise dimension and the supremum of the $\alpha$-H\"older derivative.
\begin{Lemma}\label{limsupLemmaThermo}
For $\omega\in\Omega$ not an interval endpoint we have
$$\underline d_{\omega}<\alpha\Rightarrow\limsup_{r\to 0^{\pm}}\frac{\mu_{\psi}([\omega,\omega+r])}{r^{\alpha}}=\infty$$
\end{Lemma}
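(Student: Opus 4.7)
The plan is to convert the hypothesis $\underline d_\omega<\alpha$ into a Birkhoff sum growth condition and route that growth through the sub-cylinder structure immediately to the right (and to the left) of $\omega$. By the Gibbs estimate of Theorem~\ref{pressureLimit} combined with bounded distortion and Lemma~\ref{symbolicequi},
\[
\frac{\mu_\psi([\omega_1,\ldots,\omega_n])}{\bigl|[\omega_1,\ldots,\omega_n]\bigr|^\alpha}\asymp\exp\bigl(S_n(\psi-\alpha\varphi)(\omega)\bigr),
\]
so $\underline d_\omega<\alpha$ is equivalent to $\limsup_{n\to\infty}S_n(\psi-\alpha\varphi)(\omega)=+\infty$, and I fix a sequence $n_j\to\infty$ along which this diverges.

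For the right-sided limsup set $N_R=\{k\in\N:\omega_{k+1}\neq 1\}$; this set is infinite since $\omega$, not being a right interval endpoint, has coding that is not eventually $\underline 1$. For each $k\in N_R$ the sub-cylinder $[\omega_1,\ldots,\omega_k,1]$ sits inside $[\omega_1,\ldots,\omega_k]$ entirely to the right of $\omega$, so letting $r_k$ be the distance from $\omega$ to its right endpoint gives $r_k\leq|[\omega_1,\ldots,\omega_k]|$ and, using the Gibbs property with $\psi$ bounded,
\[
\mu_\psi([\omega,\omega+r_k])\geq\mu_\psi([\omega_1,\ldots,\omega_k,1])\asymp\mu_\psi([\omega_1,\ldots,\omega_k]).
\]
Therefore $\mu_\psi([\omega,\omega+r_k])/r_k^\alpha$ is bounded below by a constant multiple of $\exp(S_k(\psi-\alpha\varphi)(\omega))$, and the right-sided claim reduces to showing $\limsup_{k\in N_R}S_k(\psi-\alpha\varphi)(\omega)=+\infty$.

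To transport the divergence of $S_{n_j}(\psi-\alpha\varphi)(\omega)$ onto $N_R$, I associate to each $n_j$ a nearby $k_j\in N_R$: if $(\psi-\alpha\varphi)(\underline 1)\geq 0$ take $k_j$ to be the first $m>n_j$ with $\omega_{m+1}\neq 1$, otherwise take the largest $k<n_j$ with $\omega_{k+1}\neq 1$. Every symbol of $\omega$ strictly between the two indices is $1$, and H\"older continuity of $\psi-\alpha\varphi$ together with the geometric contraction of cylinders gives a telescoping identity of the form $S_{k_j}(\psi-\alpha\varphi)(\omega)=S_{n_j}(\psi-\alpha\varphi)(\omega)+\delta_j\cdot(\psi-\alpha\varphi)(\underline 1)+O(1)$, where $\delta_j$ is an integer whose sign matches the chosen direction. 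The direction was picked so that this correction is non-negative, yielding $S_{k_j}\geq S_{n_j}-C$ and hence $S_{k_j}\to+\infty$. That $k_j\to\infty$ is immediate going forward, and going backward follows from every run of $1$'s in $\omega$'s coding being finite (as $\omega$ is not eventually $\underline 1$). The left-side statement is then obtained by the symmetric argument with $N_L=\{k:\omega_{k+1}\neq 0\}$ and $\underline 0$ replacing $\underline 1$.

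The main obstacle is this last transport step: a priori the indices $n_j$ witnessing divergence might all satisfy $\omega_{n_j+1}=1$ and thus fall outside $N_R$. Choosing the direction of relocation based on the sign of $(\psi-\alpha\varphi)(\underline 1)$ is what keeps the Birkhoff sum from deteriorating during the move, and this is where the non-endpoint hypothesis on $\omega$ is genuinely used.
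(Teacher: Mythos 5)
Your proof is correct and follows essentially the same route as the paper's: bound $\mu_\psi([\omega,\omega+r])$ from below by the measure of the sibling cylinder $[\omega_1,\ldots,\omega_k,1]$ at a level $k$ with $\omega_{k+1}\neq 1$, use the Gibbs property to compare this with $\mu_\psi([\omega_1,\ldots,\omega_k])\asymp e^{S_k\psi(\omega)}$, and then exhibit a subsequence of such admissible levels along which $S_k(\psi-\alpha\varphi)(\omega)\to\infty$. The one point where you go beyond the paper is the transport step: the paper merely asserts that a subsequence of admissible levels with $d^{k}_{\omega}<\alpha$ exists, whereas your sign-dependent relocation through the runs of $1$'s (moving forward or backward according to the sign of $(\psi-\alpha\varphi)(\underline 1)$, with the H\"older/bounded-distortion control of the Birkhoff sum along the run) actually justifies this, filling a small but genuine gap in the published argument.
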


\begin{proof}
We prove only the result from the right, the other case is left to the reader.
There must be a sequence of $(k_{n})_{n=1}^{\infty}$ such that $\omega(k_{n})\neq1$. Therefore we have a sequence of $r_{n}>0$ such that $\omega+r_{k_{n}}$ is a right interval point and
$$\frac{F(\omega+r_{k_{n}})-F(\omega)}{|[\omega,\omega+r_{k_{n}}]|^{\alpha}}\geq\frac{\mu_{\psi}([\omega_{1}\hdots\omega_{k_{n}-1}1])}{|[\omega_{1}\hdots\omega_{k_{n}-1}]|^{\alpha}}\asymp\frac{\mu_{\psi}([\omega_{1}\hdots\omega_{k_{n}-1}])}{|[\omega_{1}\hdots\omega_{k_{n}-1}]|^{\alpha}}$$
Now there must be a subsequence where we also have $d_{\omega|k_{n}}<\alpha$ for some $k_{n}$ and so we get
$$\frac{\mu_{\psi}([\omega_{1}\hdots\omega_{k_{n}-1}])}{|[\omega_{1}\hdots\omega_{k_{n}-1}]|^{\alpha}}\geq\frac{|[\omega_{1}\hdots\omega_{k_{n}-1}]|^{c\alpha}}{|[\omega_{1}\hdots\omega_{k_{n}-1}]|^{\alpha}}=|[\omega_{1}\hdots\omega_{k_{n}-1}]|^{c-1}$$
for $0<c<1$. It is immediate that this sequence tends to infinity and therefore the required result follows.
\end{proof}

\begin{Lemma}
For $\kappa<\alpha$ such that $\kappa\neq d_{\underline j}$, with $\underline j$ being the word consisting of a single $j\in J$ we have
$$d_{x}=\kappa\Rightarrow x\in S^{\alpha}_{\infty}$$
for $\kappa$ arbitrarily close to $1$.
\end{Lemma}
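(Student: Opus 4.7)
The plan is to show that $d_{\omega}=\kappa<\alpha$ together with $\kappa\neq d_{\underline j}$ for all $j\in J$ forces both one-sided limits of $\mu_{\psi}([\omega,\omega+r])/r^{\alpha}$ to equal $+\infty$, so $\omega\in S^{\alpha}_{\infty}$. I focus on the right-hand limit; the left-hand argument is symmetric, with the digit $1$ replaced by $0$.

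A preliminary observation is that $\kappa\neq d_{\underline 1}$ implies the coding of $\omega$ has infinitely many coordinates $\omega_{n}\neq 1$. Indeed, a direct Gibbs-measure computation yields $d_{\underline j}=\psi(\underline j)/\varphi(\underline j)$, and if $\omega$ were eventually $\underline 1$ the same bounded-distortion argument applied to its tail would give $d_{\omega}=d_{\underline 1}$. Next fix $\epsilon>0$ with $\kappa+\epsilon<\alpha$. By Lemma~\ref{symbolicequi} and Theorem~\ref{pressureLimit}, $d_{\omega}=\kappa$ yields
$$\mu_{\psi}([\omega_{1},\ldots,\omega_{n}])\;\geq\;|[\omega_{1},\ldots,\omega_{n}]|^{\kappa+\epsilon}$$
for all sufficiently large $n$.

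Given $r>0$ small, let $n=n(r)$ be the largest integer with $[\omega,\omega+r]\subseteq X_{\omega_{1}\ldots\omega_{n}}$. Then $\omega+r\in X_{\omega_{1}\ldots\omega_{n}}\setminus X_{\omega_{1}\ldots\omega_{n+1}}$, which forces $\omega_{n+1}\neq 1$ (otherwise $X_{\omega_{1}\ldots\omega_{n+1}}$ would be the rightmost sub-cylinder of $X_{\omega_{1}\ldots\omega_{n}}$, and crossing its right boundary would exit $X_{\omega_{1}\ldots\omega_{n}}$ as well, contradicting maximality of $n$). Bounded distortion gives $|X_{\omega_{1}\ldots\omega_{n}}|\asymp r$, and the rightmost sub-cylinder $X_{\omega_{1}\ldots\omega_{n},1}$ lies strictly to the right of the sub-cylinder containing $\omega$. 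Using the integer $\zeta$ from the remark preceding Lemma~\ref{liminfLemmaThermo} together with a descent of bounded depth into $X_{\omega_{1}\ldots\omega_{n},1}$, one produces a cylinder $Y_{r}\subseteq[\omega,\omega+r]$ with $|Y_{r}|\asymp r$. Combining,
$$\frac{\mu_{\psi}([\omega,\omega+r])}{r^{\alpha}}\;\geq\;\frac{\mu_{\psi}(Y_{r})}{r^{\alpha}}\;\succ\;r^{\kappa+\epsilon-\alpha}\;\longrightarrow\;\infty\qquad\text{as }r\to 0^{+},$$
since $\kappa+\epsilon-\alpha<0$. The mirror construction with $1$ replaced by $0$ and $r\to 0^{-}$ handles the left limit; together they place $\omega$ in $S^{\alpha}_{\infty}$.

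The main obstacle is the uniform construction of $Y_{r}$: $\omega$ may lie very close to the right boundary of its deep enclosing cylinders, which corresponds symbolically to long runs of the digit $1$ in the coding of $\omega$. The hypothesis $\kappa\neq d_{\underline 1}$ is exactly what prevents such a tail from dictating the local dimension and guarantees that the bounded-depth descent can always be performed, keeping $|Y_{r}|\asymp r$ uniformly in $r$; without it the cylinder produced could be exponentially smaller than $r$ and the estimate would collapse.
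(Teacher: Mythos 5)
Your overall strategy---a direct lower bound on $\mu_{\psi}([\omega,\omega+r])/r^{\alpha}$ obtained by locating a cylinder $Y_{r}\subseteq[\omega,\omega+r]$ of controlled size---is viable, and you have correctly identified where the difficulty lives: long runs of the digit $1$ in the coding of $\omega$. But the step you use to dispose of that difficulty is a genuine gap, and as stated it is false. The hypothesis $d_{\omega}=\kappa\neq d_{\underline 1}$ does \emph{not} give a ``bounded-depth descent'' or the uniform estimate $|Y_{r}|\asymp r$. What it gives is only that, once $n$ is large enough that the finite ratios $d_{\omega|n}$ lie in $(\kappa-\epsilon,\kappa+\epsilon)$, a run of $1$s starting at level $n$ has length at most of order $\epsilon n$; runs whose length grows linearly in $n$ with small slope are perfectly compatible with $d_{\omega}=\kappa$. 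At the corresponding scales $r$ (namely when $\omega+r$ has just left a deep cylinder whose right edge $\omega$ hugs, so that $[\omega,\omega+r]\cap E$ reaches no further than that cylinder's right edge), the largest cylinder contained in $[\omega,\omega+r]$ has diameter only $\succ r^{1+\epsilon''}$ for some $\epsilon''>0$ depending on $\epsilon$, not $\asymp r$; for the same reason your earlier claim $|X_{\omega_{1}\ldots\omega_{n}}|\asymp r$ also fails at these scales. Your final display then only yields $r^{(1+\epsilon'')(\kappa+\epsilon)-\alpha}$, and one must still verify that this exponent is negative. That verification is precisely the quantitative content of the lemma, and your write-up replaces it with the assertion that the obstacle does not arise.

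The missing piece is exactly what the paper's proof supplies: from the two-sided control $d_{\omega|n}\in(\kappa-\epsilon,\kappa+\epsilon)$ one extracts the maximal admissible $j$-block length, $l_{n}\asymp\bigl(\tfrac{\kappa+\epsilon}{\kappa-\epsilon}-1\bigr)S_{k_{n}}\psi(\omega)/\bigl(\psi(\underline j)-(\kappa+\epsilon)\varphi(\underline j)\bigr)$ (this is where $\kappa\neq d_{\underline j}$ enters, keeping the denominator away from zero), writes $l_{n}=\epsilon'S_{k_{n}}\psi(\omega)$ with $\epsilon'\to0$ as $\epsilon\to0$, and then checks that $e^{S_{k_{n}}\psi(x)-\alpha S_{k_{n}}\varphi(x)+l_{n}\psi(\underline j)}\asymp e^{((1+\epsilon')\kappa-\alpha)S_{k_{n}}\varphi(x)}\to\infty$, the strict inequality $\kappa<\alpha$ being what absorbs the $\epsilon'$ loss. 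Lemma~\ref{boundedDifLem} then converts unboundedness along every block sequence into infinitude of the $\alpha$-H\"older derivative. Until you carry out (or import) this block-length estimate, your argument does not close.
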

\begin{proof}
Fix $\omega$ with pointwise dimension $\kappa$ as required,  and let a small $\epsilon>0$ be given. Then from some stage $N$ we must have for $n>N$ $d_{\omega|n}\in(\kappa-\epsilon,\kappa+\epsilon)$. This gives us a maximum length $l_{n}$ (dependent on $n$) of $0$ or $1$ strings as the condition on $\kappa$ means that too long a string would eventually cause $d$ to fall outside the required length.
Here we will only consider the case of the dimension increasing with $j$-blocks. Decreasing is handled in almost the same way and is left to the reader.
The maximum length $l_{n}$ is obtained when at some stage $k_{n}$ we have $\mu_{\psi}([\omega_{1},\hdots,\omega_{k_{n}}])\asymp|[\omega_{1},\hdots,\omega_{k_{n}}]|^{\kappa-\epsilon}$, which is followed by an $j$-block of maximal length $l_{n}$ such that $\mu_{\psi}([\omega_{1},\hdots,\omega_{k_{n}+l_{n}}])\asymp|[\omega_{1},\hdots,\omega_{k_{n}+l_{n}}]|^{\kappa+\epsilon}$ We thus get
$$\mu_{\psi}([\omega_{1},\hdots,\omega_{k_{n}+l_{n}}])\asymp|[\omega_{1},\hdots,\omega_{k_{n}+l_{n}}]|^{\kappa+\epsilon}
$$
$$\Rightarrow e^{S_{k_{n}}\psi(\omega)+l_{n}\psi(\underline i)}\asymp |[\omega_{1},\hdots,\omega_{k_{n}}]|^{\kappa+\epsilon}e^{l_{n}(\kappa+\epsilon)\varphi(\underline i)}\asymp e^{(\kappa+\epsilon)/(\kappa-\epsilon)S_{k_{n}}\psi(\omega)+l_{n}(\kappa+\epsilon)\varphi(\underline i)}$$
Which means that
$$l_{n}\asymp\frac{\left(\frac{\kappa+\epsilon}{\kappa-\epsilon}-1\right)S_{k_{n}}\psi(\omega)}{\psi(\underline j)-(\kappa+\epsilon)\varphi(\underline j)}$$

This gives us an expression for the maximum length and from that we can see that for a small enough $\epsilon$, i.e.\ from some stage $N$ 
$$e^{S_{k_{n}}\psi(x)-\alpha S_{k_{n}}\varphi(x)+l_{n}\psi(\underline j)}\asymp e^{S_{k_{n}}\psi(x)-\alpha S_{k_{n}}\varphi(x)+\epsilon' S_{k_{n}}\psi(x)}\asymp e^{((1+\epsilon')\kappa-\alpha)S_{k_{n}}\varphi(x)}$$
where $\epsilon'$ satisfies $l_{n}=\epsilon' S_{k_{n}}\psi(x)$, which is an arbitrarily small constant dependent on $\epsilon$ and $\epsilon'\to0$ as $\epsilon\to0$. So $((1+\epsilon)\kappa-1)<0$ for sufficiently large $N$. Combining this with Lemma~\ref{boundedDifLem} below the $\alpha$-H\"older derivative is necessarily infinite and the required result follows.
\end{proof}
We are now ready to proof Theorem~\ref{firstTheo}
\begin{proof}[Proof of Theorem~\ref{firstTheo}.]
Combining the lemmas above we get the following relations,
$$\underline d_{x}>\alpha\Rightarrow x\in S^{\alpha}_{0}$$
$$x\in S^{\alpha}_{0}\Rightarrow \underline d_{x}\geq \alpha$$
$$x\in S^{\alpha}_{\infty}\Rightarrow \overline d_{x}\leq \alpha$$
$$d_{x}=\alpha-\epsilon\Rightarrow x\in S^{\alpha}_{\infty}$$
excluding some finite choices of $\epsilon$. Hence for arbitraily small $\epsilon$, avoiding the finite list we have
$$K_{\alpha-\epsilon}\subseteq S^{\alpha}_{\infty}\subseteq K_{\leq\alpha}$$
$$K_{>\alpha}\subseteq S^{\alpha}_{0}\subseteq K_{\geq\alpha}$$

By Theorem~\ref{mainPesinTheo} and Corollary~\ref{PesinCor} we can now give the dimension of $S^{\alpha}_{0}$ and $S^{\alpha}_{\infty}$. 
Let $\gamma(q)=\alpha$, provided such $q$ exists we have $\dim_{H}K_{\alpha-\epsilon}\leq\dim_{H}S^{\alpha}_{\infty}\leq\dim_{H}K_{\leq\alpha}$ and $\dim_{H}K_{>\alpha}\leq S^{\alpha}_{0}\leq\dim_{H} K_{\geq\alpha}$. Thus the required result follows.

If such $q$ does not exist we either have for all $x\in E$, $\psi(x)<\alpha\varphi(x)$, which gives $S_{0}^{\alpha}=E$ as for all $x$ we have $\underline d_{x}>\alpha$ and so $\dim_{H}S^{\alpha}=\dim_{H}S^{\alpha}_{\infty}=0$. The other case is $\psi(x)>\alpha\varphi(x)$ for all $x\in E$ which implies $S^{\alpha}_{0}=\varnothing$ and obviously $\dim_{H}S^{\alpha}_{\infty}=\delta$.
\end{proof}

\section{Proof of Theorem ~\ref{secondTheo}}\label{secondTheoSect}
We now turn our attention to the set $S^{\alpha}$. The case $\alpha<\gamma(q)$ for all $q\in\R$ gives $\psi(x)<\alpha\varphi(x)$ for all $x\in E$, which as mentioned above gives $\dim_{H}S^{\alpha}=0$ and we will ignore that trivial case from now on.
First a remark on the connection between $T(q)$ and $\beta(t)$ as defined above. 
\begin{Lemma}
We have the identity $\beta(t)=T(t)+\alpha t$, $\beta$ is real analytical and furthermore if there exists $t_{0}$, such that for $\gamma(t_{0})=\alpha$, we have $H(\alpha)=\beta(t_{0})$
\end{Lemma}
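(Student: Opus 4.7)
The plan is to unpack the two implicit definitions and exhibit the claimed identity via uniqueness of the root of the pressure equation, after which real analyticity and the minimum identification follow almost for free.

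First I would establish $\beta(t)=T(t)+\alpha t$. By definition $\beta(t)$ satisfies $P\bigl((\beta(t)-\alpha t)\varphi+t\psi\bigr)=0$. Setting $s:=\beta(t)-\alpha t$ this reads $P(s\varphi+t\psi)=0$. Since the topological pressure is strictly decreasing and continuous in the coefficient of $\varphi$ (standard consequence of the variational principle together with $\varphi<0$), there is a unique $s\in\R$ solving this equation for each fixed $t$; by the defining property of $T$ this unique solution is $s=T(t)$. Hence $\beta(t)-\alpha t=T(t)$, which is the first assertion. Uniqueness in the pressure equation is the only nontrivial input and is exactly what is used (implicitly) to define $T$ in Theorem~\ref{mainPesinTheo}, so nothing new has to be proved here.

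Next I would deduce real analyticity. By Theorem~\ref{mainPesinTheo} the function $T$ is real analytic on $\R$, and the map $t\mapsto\alpha t$ is obviously real analytic, so the sum $\beta(t)=T(t)+\alpha t$ is real analytic as well.

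Finally, for the identification $H(\alpha)=\beta(t_0)$ whenever $\gamma(t_0)=\alpha$: by definition $H(\gamma(q))=T(q)+q\gamma(q)$, so plugging in $q=t_0$ and using $\gamma(t_0)=\alpha$ gives $H(\alpha)=T(t_0)+\alpha t_0=\beta(t_0)$. As a complementary remark (useful later but not strictly required by the statement), differentiating $\beta$ gives $\beta'(t)=T'(t)+\alpha=-\gamma(t)+\alpha$, so the critical points of $\beta$ are precisely the solutions of $\gamma(t)=\alpha$; combined with the strict convexity of $T$ from Theorem~\ref{mainPesinTheo} this tells us that $t_0$, when it exists, is the unique minimiser of $\beta$, which matches the pictures in Figures~\ref{nomin}--\ref{min}.

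The only step requiring care is the uniqueness of $s$ solving $P(s\varphi+t\psi)=0$; everything else is a direct substitution. Since $\varphi<0$ on the limit set, $s\mapsto P(s\varphi+t\psi)$ is strictly decreasing, so uniqueness is immediate and there is no real obstacle.
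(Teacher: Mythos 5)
Your proof is correct and takes essentially the same route as the paper's: both derive $\beta(t)=T(t)+\alpha t$ by comparing the two pressure equations, then obtain real analyticity from that of $T$ and the identity $H(\alpha)=\beta(t_{0})$ by direct substitution. The only difference is that you make explicit the uniqueness of the root $s$ of $P(s\varphi+t\psi)=0$ (via strict monotonicity of the pressure in the coefficient of $\varphi$, using $\varphi<0$), a step the paper leaves implicit.
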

\begin{proof}
Note that by definition $\beta(t)$ and $T(q)$ satisfy $P((\beta(t)-\alpha t)\varphi+t\psi)=0$ and $P(T(q)\varphi+q\psi)=0$. Therefore we must have $T(t)=\beta(t)-\alpha t$ and the first result follows.
As $T(q)$ is analytical and defined for all $q\in\R$ it is obvious that $\beta(t)$ is defined for all $t\in\R$ and analytical.
It is also easy to see that for such $t_{0}$ we have ${H(\alpha)=H(\gamma(t_{0}))=T(t_{0})+\gamma(t_{0})t_{0}=\beta(t_{0})}$.
\end{proof}
It is also obvious that at $t=0$ we have $\beta(0)=T(0)=\delta$ and $\beta(1)=T(1)+\alpha=\alpha$.
\begin{Lemma}
Assume that $T(q)$ is not a trivial linear function and there exists $t_{0}$ such that $\gamma(t_{0})=\alpha$. We then have that $\beta(q)$ has a unique minimum $\beta(t_{0})$. 
If $t_{0}$ does not exist because $\gamma(t)<\alpha$ for all $t\in \R$, $\beta(t)$ is strictly increasing.
\end{Lemma}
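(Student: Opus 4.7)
The plan is to reduce everything to an analysis of the first and second derivatives of $\beta$, using the identity $\beta(t)=T(t)+\alpha t$ established in the previous lemma and the real-analyticity and strict convexity of $T$ guaranteed by Theorem~\ref{mainPesinTheo} (which applies precisely because $T$ is not the trivial linear function, i.e.\ $\mu_{\psi}$ is not the Gibbs measure of the geometric potential).

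First I would differentiate the identity to obtain
$$\beta'(t)=T'(t)+\alpha=\alpha-\gamma(t),$$
so that critical points of $\beta$ are exactly solutions of $\gamma(t)=\alpha$. Since Theorem~\ref{mainPesinTheo} gives strict convexity of $T$, we have $T''(t)>0$ for all $t\in\R$, hence $T'$ is strictly increasing and $\gamma=-T'$ is strictly decreasing. Equivalently, $\beta''(t)=T''(t)>0$, so $\beta$ is strictly convex on $\R$.

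From strict convexity, any critical point is automatically a unique global minimum. So in the first case, if a $t_0$ with $\gamma(t_0)=\alpha$ exists, then $\beta'(t_0)=0$ and $\beta'$ is strictly increasing, which forces $\beta'(t)<0$ for $t<t_0$ and $\beta'(t)>0$ for $t>t_0$. Hence $\beta$ achieves its unique minimum at $t_0$, and the value is $\beta(t_0)$ as claimed (which equals $H(\alpha)$ by the previous lemma).

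In the second case, if no such $t_0$ exists and $\gamma(t)<\alpha$ for every $t\in\R$, then $\beta'(t)=\alpha-\gamma(t)>0$ for all $t$, so $\beta$ is strictly increasing on $\R$. There is no real obstacle here beyond invoking the strict convexity of $T$ from Pesin--Weiss; the only subtlety worth flagging is to exclude the degenerate linear case for $T$ at the outset, which is exactly the hypothesis of the lemma and corresponds to $\mu_{\psi}$ being the Gibbs measure of $\varphi$.
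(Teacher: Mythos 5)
Your proposal is correct and follows essentially the same route as the paper: differentiate the identity $\beta(t)=T(t)+\alpha t$ to get $\beta'(t)=\alpha-\gamma(t)$ and $\beta''(t)=T''(t)>0$, so a critical point (which exists exactly when $\gamma(t_0)=\alpha$) is the unique global minimum by strict convexity, and in the other case $\beta'>0$ everywhere. No substantive differences from the paper's argument.
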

\begin{proof}
We have $T'(q)<0$ and $T''(q)>0$ (see \cite{Pesin97}). Therefore $\beta'(q)=T'(q)+\alpha$ and $\beta''(q)=T''(q)$. Now $\beta'(t_{0})=T'(t_{0})+\alpha=0$ and as $\beta''(q)>0$ and this solution is unique by the monotonicity of $T'(q)$, $\beta(t_{0})$ must be the global minimum. This minimum, if it exists, has then $\beta(t_{0})=H(\alpha)\geq0$. The conditions in the last case give $\beta'(t)=T'(t)+\alpha=-\gamma(t)+\alpha>0$ which imply the required result.
\end{proof}
The trivial case implies that $T(q)$ and thus $\beta(q)$ are linear. This also gives $\beta'(q)\geq0$, independent of wether $t_{0}$ exists. Note also that even though $\beta$ may not have a least value, the then necessary non-negative slope means that $\beta(t)$ must intersect the lines $-t\psi(\underline i)/\varphi(\underline i)$ and have a least value for which condition (\ref{theocond}) is fulfilled.

We now consider blocks of letters which are long enough so that the derivative vanishes. We call them $i$-blocks of length $k$ at the $n$-th level, if the $n$-th and $n+k+1$-th letter of the coding is not $i$ and those $k$ in between are.
The proof of the following lemma can be found in \cite{Kesseboehmer09}.
\begin{Lemma}
If our point $\omega$ has an $i$-block of length $k$ at the $n$-th level, then there exists $\eta\in\Omega$ such that $|\omega-\eta|\asymp \exp (S_{n}\varphi(\omega))$ and
$$\frac{F_{\psi}(\omega)-F_{\psi}(\eta)}{|\omega-\eta|^{\alpha}}\asymp e^{S_{n}\chi(\omega)+k \psi(\underline i)}$$,
where $\underline i$ is the point coded by the letter $i$ alone and $i\in\{0,1\}$. 
\end{Lemma}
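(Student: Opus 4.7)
The plan is to exhibit a specific $\eta\in\Omega$ adapted to the $i$-block, and then compute both $|\omega-\eta|$ and $F_\psi(\omega)-F_\psi(\eta)$ using bounded distortion and the Gibbs property of Theorem~\ref{pressureLimit}. Because $\omega_n\ne i$ in the two-symbol alphabet $J=\{0,1\}$ we must have $\omega_n=1-i$; the $i^k$ block then forces $\omega$ to sit at the end of $[\omega_1,\ldots,\omega_n]$ closest to the sibling cylinder $[\omega_1,\ldots,\omega_{n-1},i]$. I would take $\eta$ to be the endpoint of that sibling cylinder nearest to $\omega$: for $i=0$, $\eta=\omega_1\cdots\omega_{n-1}\,0\,\underline{1}$ (the rightmost point of $[\omega_1,\ldots,\omega_{n-1},0]$); for $i=1$, symmetrically $\eta=\omega_1\cdots\omega_{n-1}\,1\,\underline{0}$.

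For the distance, applying bounded distortion to $f_{\omega_1\cdots\omega_{n-1}}$ shows that the gap between the two sibling level-$n$ cylinders is $\asymp\exp(S_{n-1}\varphi(\omega))\asymp\exp(S_n\varphi(\omega))$, the second $\asymp$ using boundedness of $\varphi$. Since $\omega$ sits within distance $\asymp\exp(S_n\varphi(\omega)+k\varphi(\underline i))\ll\exp(S_n\varphi(\omega))$ of its cylinder's boundary facing $\eta$, and $\eta$ is itself on the extreme point of its cylinder facing $\omega$, the total distance is $|\omega-\eta|\asymp\exp(S_n\varphi(\omega))$.

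For the measure, write $F_\psi(\omega)-F_\psi(\eta)=\pm\mu_\psi(I)$ with $I$ the interval between $\omega$ and $\eta$. By the choice of $\eta$ at the extreme endpoint of its cylinder, $I\cap E\cap[\omega_1,\ldots,\omega_{n-1},i]$ is negligible and the contribution reduces to the points of $E$ on $\omega$'s side of its own cylinder lying between $\omega$ and the facing boundary of $[\omega_1,\ldots,\omega_n]$. The extremal $i^k$-nesting confines this set to (and, up to bounded factors, equals) $[\omega_1,\ldots,\omega_n,i^{k+1}]\cup[\omega_1,\ldots,\omega_n,i^k,\omega_{n+k+1}]$, whose Gibbs mass is $\asymp\exp(S_n\psi(\omega)+k\psi(\underline i))$ by Theorem~\ref{pressureLimit}, with H\"older continuity of $\psi$ and exponential convergence of $\sigma^{n+j}\omega$ to $\underline i$ for $1\le j\le k$ absorbing the block variation into the $\asymp$ constants. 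Combining with $\chi=\psi-\alpha\varphi$ yields
\[
\frac{F_\psi(\omega)-F_\psi(\eta)}{|\omega-\eta|^\alpha}
\asymp \exp\bigl(S_n\psi(\omega)-\alpha S_n\varphi(\omega)+k\psi(\underline i)\bigr)
= \exp\bigl(S_n\chi(\omega)+k\psi(\underline i)\bigr).
\]

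The main technical obstacle is the measure estimate: a crude bound for $\mu_\psi(I)$ gives the much larger $\exp(S_n\psi(\omega))$, since $I$ spans an entire level-$n$ cylinder gap. The subtlety, which is the heart of the construction in Kesseb\"ohmer and Stratmann~\cite{Kesseboehmer09}, is that placing $\eta$ precisely at the facing endpoint of the sibling cylinder makes $I$ miss all of that cylinder's mass, while extremality of $\omega$ inside $[\omega_1,\ldots,\omega_n]$ restricts the remaining contribution to the $i^k$-tail at exactly the scale $\exp(S_n\psi+k\psi(\underline i))$.
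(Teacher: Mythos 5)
Your proposal is correct and is essentially the argument the paper relies on: the paper itself does not prove this lemma but defers to \cite{Kesseboehmer09}, and your construction --- taking $\eta$ to be the facing endpoint of the adjacent level-$n$ cylinder, bounding the gap by bounded distortion to get $|\omega-\eta|\asymp e^{S_n\varphi(\omega)}$, and sandwiching $\mu_\psi([\eta,\omega))$ between $\mu_\psi([\omega_1,\ldots,\omega_n,i^{k+1}])$ and $\mu_\psi([\omega_1,\ldots,\omega_n,i^{k}])$ via the Gibbs property and H\"older continuity of $\psi$ --- is precisely the proof given there. The only caveat is your explicit restriction to the two-letter alphabet, which is inessential: for general finite $J$ the hypothesis $\omega_n\neq i$ with $i\in\{0,1\}$ still guarantees a sibling cylinder on the correct side (to the left for $i=0$, to the right for $i=1$), and the identical estimates go through.
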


The idea of the following lemma is also taken from the same paper by Kesseb\"ohmer and Stratmann but needed slight modification as our system now allows local dimensions greater than $\alpha$.

\begin{Lemma}\label{boundedDifLem}
The $\alpha$-H\"older derivative does not exist in the general sense at points $\omega$ with $\overline d_{\omega}<\alpha$ iff there exists strictly increasing sequences of integers such that $\omega$ has an $i$-block of length $k_{m}$ at the $n_{m}$-th level such that 
\begin{equation}\label{boundedDifCond}
e^{S_{n_{m}}\chi(\omega)+k_{m}\psi(\underline i)}
\end{equation}
is bounded from above, where $i\in\{0,1\}$.
\end{Lemma}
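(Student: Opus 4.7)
The plan is to combine Lemma~\ref{limsupLemmaThermo} with the preceding lemma that estimates the H\"older ratio produced by an $i$-block. Because $\underline d_\omega\leq\overline d_\omega<\alpha$, Lemma~\ref{limsupLemmaThermo} already forces $\limsup_{r\to 0^\pm}\mu_\psi([\omega,\omega+r])/r^\alpha=\infty$, so under the hypothesis $\overline d_\omega<\alpha$ the $\alpha$-H\"older derivative fails to exist in the general sense if and only if the corresponding liminf is finite. This reformulation is what I work with throughout.

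For the backward direction, suppose there exist strictly increasing $(n_m)$ and $(k_m)$ along which the quantity in (\ref{boundedDifCond}) is bounded above by some $M$. By the preceding lemma, each $i$-block of length $k_m$ at level $n_m$ produces a point $\eta_m\in\Omega$ with $|\omega-\eta_m|\asymp e^{S_{n_m}\varphi(\omega)}$; since $\varphi<0$ and $n_m\to\infty$, this forces $\eta_m\to\omega$, while the same lemma gives $|F_\psi(\omega)-F_\psi(\eta_m)|/|\omega-\eta_m|^\alpha\asymp e^{S_{n_m}\chi(\omega)+k_m\psi(\underline i)}\leq M'$. Hence the liminf is finite and, together with the infinite limsup, the derivative does not exist in the general sense.

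For the forward direction, assume the derivative does not exist in the general sense; by the reformulation above there is then a sequence $\eta_m\to\omega$ along which $|F_\psi(\omega)-F_\psi(\eta_m)|/|\omega-\eta_m|^\alpha$ is uniformly bounded. I would let $n_m$ be the largest index at which the codings of $\omega$ and $\eta_m$ agree, so that $\omega_{n_m+1}\neq(\eta_m)_{n_m+1}$, and let $i\in\{0,1\}$ be the rightmost or leftmost letter, chosen so that $\omega$ lies in the sub-branch of the level-$n_m$ cylinder facing $\eta_m$. Writing $k_m$ for the length of the maximal monochromatic $i$-block of $\omega$ attached at level $n_m$, I would then use the strong separation condition and bounded distortion to identify $|\omega-\eta_m|$ and $\mu_\psi([\omega,\eta_m])$, up to uniform constants, with the cylinder lengths and masses predicted by the preceding lemma; this expresses the H\"older ratio as comparable to $e^{S_{n_m}\chi(\omega)+k_m\psi(\underline i)}$, and passing to a subsequence along which a single value $i\in\{0,1\}$ is realised infinitely often delivers the required bounded sequence in (\ref{boundedDifCond}).

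The principal obstacle is this last comparison: I must justify that the maximal monochromatic block adjacent to the disagreement level genuinely dominates the H\"older ratio for an arbitrary $\eta_m$, rather than being swamped by a configuration in which $\eta_m$ also reaches deep into its own cylinder or in which intermediate sibling cylinders contribute substantial measure. In \cite{Kesseboehmer09} this is automatic under the blanket hypothesis $\alpha\varphi<\psi$, because the block-type configurations are then forced to be extremal. In the present generality, where $\overline d_\omega<\alpha$ may hold without this potential inequality, I would close the argument with a short comparison using strong separation and bounded distortion, showing that any competing configuration produces a ratio of the same order as, or larger than, the one generated by the extremal $i$-block of $\omega$. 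Boundedness of the H\"older ratio along $\eta_m$ then transfers directly to boundedness of (\ref{boundedDifCond}), completing the equivalence.
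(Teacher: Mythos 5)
Your proposal is correct and follows essentially the same route as the paper: the same reduction (by Lemma~\ref{limsupLemmaThermo} the limsup of the ratio is automatically infinite when $\overline d_{\omega}<\alpha$, so non-existence in the general sense is equivalent to a finite liminf), the same use of the preceding block lemma for the ``if'' direction, and the same disagreement-level/maximal-block extraction for the ``only if'' direction, which the paper merely organises as a proof by contradiction rather than directly. The one substantive point is that the ``principal obstacle'' you flag dissolves: you do not need the H\"older ratio to be \emph{comparable} to $e^{S_{n_{m}}\chi(\omega)+k_{m}\psi(\underline i)}$, only bounded below by it, and this one-sided estimate is insensitive to how deep $\eta_{m}$ sits in its own cylinder. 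Concretely, for $\eta_{m}>\omega$ first disagreeing with $\omega$ at level $n_{m}$ and with $\omega$ carrying a $1$-block of length $k_{m}$ there, the cylinder $[\omega_{1},\hdots,\omega_{n_{m}},\underline 1_{k_{m}+1}]$ lies strictly between $\omega$ and $\eta_{m}$, so $|F_{\psi}(\eta_{m})-F_{\psi}(\omega)|\geq\mu_{\psi}([\omega_{1},\hdots,\omega_{n_{m}},\underline 1_{k_{m}+1}])$, while $|\omega-\eta_{m}|\leq|[\omega_{1},\hdots,\omega_{n_{m}-1}]|$; with bounded distortion this gives $e^{S_{n_{m}}\chi(\omega)+k_{m}\psi(\underline 1)}\prec$ ratio, so boundedness of the ratio transfers to (\ref{boundedDifCond}) exactly as you claim. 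The no-block case $\omega_{n_{m}+1}\neq 1$ yields $e^{S_{n_{m}}\chi(\omega)}\prec$ ratio, which is unbounded precisely because $\overline d_{\omega}<\alpha$, so it can occur only finitely often along a bounded-ratio sequence. To make the argument complete you should also record the reduction to $\eta_{m}\in\Omega$ (replace $\eta_{m}$ by the nearest point of $\Omega$ farther from $\omega$, which preserves $F_{\psi}$ and only decreases the ratio) and the standing exclusion of interval endpoints, both of which the paper states explicitly.
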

\begin{proof}
Assume such sequences exist, then as (\ref{boundedDifCond}) is bounded, so is
$$\frac{F_{\psi}(\omega)-F_{\psi}(\eta)}{|\omega-\eta|^{\alpha}}$$
immediately giving a sequence such that the infimum is finite. This implies the ``if'' result.

Now the ``only if'' part is proven by contradiction. Assume that $\omega\in\Omega$, not an interval endpoint with $\overline d_{\omega}<\alpha$, has $i$-blocks occurring at $n_{m}$ of length $k_{m}$ such that $\liminf_{m\to\infty}\exp(S_{n_{m}}\chi(\omega)+k_{m}\psi(\underline i))=\infty$. Let $(\omega_{m})_{m\in\N}$ be any sequence of points in the seed set such that $\forall m$, $\omega_{m}\neq\omega$ and $\lim_{m\to\infty}\omega_{m}=\omega$.

We will show that the derivative must necessarily be infinite under these conditions. Now we can assume without loss of generality that $\omega_{m}$ is in $\Omega$. This is because we can choose the closest point $\omega_{m}'$in $\Omega$ to $\omega_{m}$ that is further away from $\omega$. Obviously $F_{\psi}(\omega_{m})=F_{\psi}(\omega_{m}')$ and $|\omega-\omega_{m}'|\geq|\omega-\omega_{m}|$. This $\omega'$ exists as it is an interval endpoint and $\omega$ is not.
Thus we have
$$\frac{F_{\psi}(\omega)-F_{\psi}(\omega_{m})}{|\omega-\omega_{m}|^{\alpha}}\geq\frac{F_{\psi}(\omega)-F_{\psi}(\omega_{m}')}{|\omega-\omega_{m}'|^{\alpha}}$$

Take the case that $\omega_{m}>\omega$ for all $m$. Let $n_{m}$ be the integer such that $\omega_{m}\in[\omega_{1},\hdots,\omega_{n_{m}-1}]$ and $\omega_{m}\notin[\omega_{1},\hdots,\omega_{n_{m}}]$. If $\omega_{n_{m}}\neq1$ then $\omega$ and $\omega_{m}$ are separated by $[\omega_{1},\hdots,\omega_{n_{m}},1]$ and so
$$\frac{F_{\psi}(\omega)-F_{\psi}(\omega_{m})}{|\omega-\omega_{m}|^{\alpha}}\geq\frac{\mu_{\psi}([\omega_{1},\hdots,\omega_{n_{m}},1])}{|[\omega_{1},\hdots,\omega_{n_{m}-1}]|^{\alpha}}\asymp e^{S_{n_{m}}\chi(\omega)}$$ which is unbounded as $\overline d_{\omega}<\alpha$.

Therefore $\omega_{n_{m}+1}=1$ and there exists a 1-block of length $k_{m}$ at the $n_{m}$-th level. Thus $\omega$ and $\omega_{m}$ are separated by $[\omega_{1},\hdots,\omega_{n_{m}},\underline 1_{k_{m}+1}]$, where $\underline 1_{l}$ refers to a word of length $l$ consisting of letters 1. In this case we have
$$\frac{F_{\psi}(\omega)-F_{\psi}(\omega_{m})}{|\omega-\omega_{m}|^{\alpha}}\geq\frac{\mu_{\psi}([\omega_{1},\hdots,\omega_{n_{m}},1_{k_{m}+1}])}{|[\omega_{1},\hdots,\omega_{n_{m}-1}]|^{\alpha}}\asymp e^{S_{n_{m}}\chi(\omega)+k_{m}\psi(\underline 1)}$$
but the latter is unbounded as well, which gives the wanted contradiction and concludes the ``only if'' part. The case for $\omega_{m}<\omega$ is analogous and we omit it here.
\end{proof}

We now continue with the proof of the main theorem. For this we will partition $\Omega$ into the sets $\C^{+}_{n}$ and $\C^{-}_{n}$ where for every cylinder $[\omega_{1},\hdots,\omega_{m}]\in\C^{+}_{n}$ we have $|S_{m}\chi(\omega)-n|\prec 1$ and for every $[\omega_{1},\hdots,\omega_{m}]\in\C^{-}_{n}$ we have $|S_{m}\chi(\omega)+n|\prec 1$, respectively.

We also introduce ``stopping time'' which we define here as 
$$T_{t}(\omega)=\sup\{k\in\N\,;\,S_{k}\chi(\omega)<t\}$$
Similarly we define by $\C_{n}^{\pm,i}$ the collection of cylinders of $\C_{n}^{\pm}$ with an $i$-block of length $n_{\epsilon}$ attached. The latter is given by $n_{\epsilon}=\lfloor -n(1-\epsilon)/\psi(\underline i)\rfloor$. 

\subsection{Upper bound}
We can now split points with no derivative in two sets

$$S^{\alpha}_{*}=\{\omega\,;\,\overline d_{\omega}<\alpha\text{ and has no $\alpha$-H\"older derivative}\}$$
and 
$$S^{\alpha}_{\lessgtr}=\{\omega\,;\,\overline d_{\omega}\geq\alpha\text{ and has no $\alpha$-H\"older derivative}\}$$
 and so have $\dim_{H}S^{\alpha}\leq\dim_{H}\left(S^{\alpha}_{\lessgtr}\cup S_{*}^{\alpha}\right)$

Obviously $D_{<>}:=\{\omega\,;\,\underline d<\alpha\text{ and }\overline d>\alpha\}$ is a subset of $S^{\alpha}_{\lessgtr}$ as points in $D_{<>}$ have derivative with infinite supremum and 0 infimum by Lemma~\ref{liminfLemmaThermo} and~\ref{limsupLemmaThermo}. Also ${S^{\alpha}_{\lessgtr}\subseteq D_{\leq\geq}:=\{\omega\,;\,\underline d_{\omega}\leq\alpha\text{ and }\overline d_{\omega}\geq\alpha\}}$ as upper or local dimension coinciding with $\alpha$ is not included in $S^{\alpha}_{*}$. 
Now let $s$ be the least value $\beta(t)$ attains such that (\ref{theocond}) is satisfied and let $s'>s$.

\begin{Lemma} If there exists $t_{0}$ such that $\gamma(t_{0})=\alpha$ we have
$$\dim_{H}D_{\leq\geq}\leq s$$
otherwise $\dim_{H}D_{\leq\geq}=0$
\end{Lemma}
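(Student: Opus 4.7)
The plan is to cover $D_{\leq\geq}$ by cylinders drawn from the families $\C_n^{\pm}$ and their $i$-block extensions $\C_n^{\pm,i}$, and to control the resulting $s'$-dimensional sum using the pressure identity $P((\beta(t)-\alpha t)\varphi+t\psi)=0$ which defines $\beta(t)$. The geometric input is that any $\omega\in D_{\leq\geq}$ has $\underline d_\omega\leq\alpha\leq\overline d_\omega$, so via Lemma~\ref{symbolicequi} the Birkhoff sum $S_m\chi(\omega)=S_m\psi(\omega)-\alpha S_m\varphi(\omega)$ cannot escape to $\pm\infty$ and must revisit any fixed bounded window around zero infinitely often. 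Through the stopping time $T_n(\omega)$, every such $\omega$ is covered, at each threshold $N$, by a cylinder of the form $\C_n^{\pm}$ or $\C_n^{\pm,i}$ for some $n\geq N$.

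First I would compute the $s'$-content on $\C_n^{-}$. Theorem~\ref{pressureLimit} applied to the potential $(\beta(t)-\alpha t)\varphi+t\psi$ gives
\[
\sum_{[\omega_1,\ldots,\omega_m]\in\C_n^{-}}\exp\bigl((\beta(t)-\alpha t)S_m\varphi+tS_m\psi\bigr)\prec 1,
\]
and using $S_m\chi\asymp -n$ on $\C_n^{-}$ together with $|[\omega_1,\ldots,\omega_m]|\asymp e^{S_m\varphi}$, this rearranges to
\[
\sum_{[\omega_1,\ldots,\omega_m]\in\C_n^{-}}|[\omega_1,\ldots,\omega_m]|^{\beta(t)}\prec e^{tn}.
\]
The sum over $n$ is geometric provided $t<0$; the symmetric bound on $\C_n^{+}$ requires $t>0$ and is handled independently. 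Next I would incorporate the $i$-block extensions: a cylinder in $\C_n^{\pm,i}$ has its diameter shrunk by $\exp(n_\epsilon\varphi(\underline i))$ with $n_\epsilon\asymp -n(1-\epsilon)/\psi(\underline i)$, so its $s'$-content carries an extra factor $\exp\bigl(-s'n(1-\epsilon)\varphi(\underline i)/\psi(\underline i)\bigr)$. Combined with $e^{tn}$, summability in $n$ reduces exactly to $s'\geq -t\,\psi(\underline i)/\varphi(\underline i)$ for $i\in\{0,1\}$, the constraint in~(\ref{theocond}). Taking $s'>s$ and optimising $t$ over the admissible range yields $\Haus^{s'}(D_{\leq\geq})<\infty$; letting $s'\downarrow s$ gives $\dim_{H}D_{\leq\geq}\leq s$.

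For the degenerate case, if no $t_0$ satisfies $\gamma(t_0)=\alpha$ then the strict monotonicity of $\gamma=-T'$ (Theorem~\ref{mainPesinTheo}), combined with our assumption excluding the trivial case $\alpha<\gamma(t)$ for all $t$, forces $\alpha$ to lie strictly above the entire range of $\gamma$. Then $\{\omega:\underline d_\omega\leq\alpha\}$ is captured by $\bigcap_{\epsilon>0}K_{\leq\gamma(q_\epsilon)+\epsilon}$ for a sequence $q_\epsilon\to+\infty$, and Corollary~\ref{PesinCor} pinches its Hausdorff dimension down to $H(\gamma(+\infty))=0$. Since $D_{\leq\geq}\subseteq\{\underline d\leq\alpha\}$, the bound $\dim_{H}D_{\leq\geq}=0$ follows.

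The main obstacle will be the bookkeeping in the cover: a long $i$-run immediately following a balanced stopping time can carry $S_m\chi$ past the next level before another admissible stopping time arrives, so the natural covering cylinder is an $i$-block extension rather than a bare $\C_n^{\pm}$ element. One must verify that the enlarged family $\C_n^{\pm}\cup\C_n^{\pm,i}$ exhausts $D_{\leq\geq}$ at every threshold without multiply counting points, and that the content estimate remains tight enough to match the claimed bound; this is where the parameter $\epsilon$ in the definition of $n_\epsilon$ enters, and it must be sent to zero only after $s'\downarrow s$.
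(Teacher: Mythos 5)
Your covering strategy has a genuine gap at its foundation. The claim that for every $\omega\in D_{\leq\geq}$ the sum $S_m\chi(\omega)$ ``cannot escape to $\pm\infty$ and must revisit any fixed bounded window around zero'' is false: it holds when $\underline d_\omega<\alpha<\overline d_\omega$ (there $S_m\chi$ changes sign infinitely often and has bounded increments), but a point with $\underline d_\omega=\overline d_\omega=\alpha$ can have $S_m\chi\to+\infty$ monotonically (take $d_\omega^m=\alpha-1/\sqrt m$), or can have $S_m\chi$ bounded, in which case $\omega$ lies in no cylinder of $\C_n^{\pm}$ for large $n$ and your cover misses it entirely; such points form a subset of $K_\alpha$ that can carry dimension up to $H(\alpha)$, so they cannot be ignored. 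Relatedly, $\C_n^{-}$ (summable only for $t<0$) and $\C_n^{+}$ (only for $t>0$) each cover only the points whose $S_m\chi$ actually reaches the corresponding level; for boundary points with $\underline d=\alpha$ or $\overline d=\alpha$ only one family is guaranteed to apply, and when $t_0>0$ the bound $\inf_{t<0}\beta(t)$ available from the $\C^{-}$ family degenerates to $\beta(0)=\delta$, so the final ``optimisation over the admissible range'' does not close. The $i$-block extensions and the constraint $s'\geq-t\psi(\underline i)/\varphi(\underline i)$ are imported from the $S_{*}^{\alpha}$ estimate, where Lemma~\ref{boundedDifLem} actually supplies long $i$-blocks; points of $D_{\leq\geq}$ need carry no such blocks, each cylinder of $\C_n^{\pm,i}$ is a sub-cylinder of one in $\C_n^{\pm}$, so these families enlarge nothing, and the constraint from (\ref{theocond}) is a red herring here: the target is only $H(\alpha)=\inf_t\beta(t)\leq s$, unconstrained.

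The degenerate case is argued with the wrong containment. If $\gamma(t)<\alpha$ for all $t$, then every point satisfies $\overline d_\omega\leq\sup_t\gamma(t)<\alpha$, so $\{\underline d\leq\alpha\}$ is all of $E$ and has dimension $\delta$; your claimed inclusion of it in $\bigcap_{\epsilon}K_{\leq\gamma(q_\epsilon)+\epsilon}$ with $q_\epsilon\to+\infty$ cannot hold, since $K_{\leq\gamma(q)}$ constrains the \emph{upper} local dimension and $\gamma(+\infty)$ is the minimal, not maximal, attainable local dimension. What actually kills $D_{\leq\geq}$ is the other half of its definition: $\{\overline d\geq\alpha\}=\varnothing$ because $\alpha$ exceeds the maximal local dimension $\lim_{q\to-\infty}\gamma(q)$. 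For the main case the paper takes a shorter route worth comparing with: a point of $D_{\leq\geq}$ has finite-level ratios $d_\omega^n$ crossing $\alpha$ infinitely often, consecutive ratios differ by $O(1/n)$, so the crossing cylinders have ratio within $O(1/n)$ of $\alpha$; covering by these crossing sets and invoking $\dim_H K_\alpha=H(\alpha)$ gives $\dim_H D_{\leq\geq}\leq H(\alpha)\leq s$ with no stopping-time families at all.
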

\begin{proof}

Take $U_{n}=\{\omega\,;\,d_{\omega}^{n}<\alpha\text{ and }d_{\omega}^{n+1}\geq\alpha\}$, where $d^{n}_{\omega}$ refers to the log ratio up to the $n$-th coding.

It is evident that every $\omega\in D_{\leq\geq}$ has an infinite sequence of $(n_{k})_{k=1}^{\infty}$ such that $\omega\in U_{n_{k}}$. Therefore $D_{\leq\geq}$ can be covered by $\bigcup_{n\in\N}U_{n}$. But the $U_{n}$ are nested such that for every $n$ there exists $k>n$ such that $U_{n}\subseteq U_{k}$. We furthermore have that $U_{k}$ is a collection of points with local dimension tending to $\alpha$ and $U_{n}\subseteq K_{\alpha}$ and we find that $\Haus^{s'}(U_{k})$ for $s'>s=H(\alpha)$ must be finite as otherwise we would have that $\dim_{H}K_{\gamma(q)}>H(\alpha)$.

The second case is obvious as we can not have local dimension either above or below $\alpha$ and so $D_{\leq\geq}=\varnothing$. The result then follows.
\end{proof}
By Lemma~\ref{boundedDifLem} every $\omega\in S_{*}^{\alpha}$ has a sequence of $n_{m}$ and $k_{m}$ such that (\ref{boundedDifCond}) is bounded. Set $l_{n_{m}}=\lfloor S_{n_{m}}\chi(\omega)\rfloor$, we must then have, for sufficiently high $m$ that $k_{m}\geq-l_{n_{m}}\left(1-\epsilon\right)/\psi(i)$ and thus $\omega\in\C_{n_{m}}^{+,i}$. Therefore  $S_{*}^{\alpha}\subseteq\bigcup_{n\in\N,i=0,1} \C_{n}^{+,i}$ and we get
\begin{Lemma}
$$\dim_{H}\left(S_{*}^{\alpha}\right)\leq s$$
\end{Lemma}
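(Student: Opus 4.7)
The plan is to use the cover $S_*^\alpha \subseteq \bigcup_{n \in \N, i \in \{0,1\}} \C_n^{+,i}$ just established and to estimate the $s'$-Hausdorff premeasure for arbitrary $s' > s$. For any $\sigma = \omega \cdot \underline i_{n_\epsilon} \in \C_n^{+,i}$ with $\omega \in \C_n^+$, the bounded distortion property yields $S_{|\omega|}\varphi(\omega) = -n + O(1)$, and so, uniformly in $\omega$,
$$
|\sigma| \asymp e^{-n + n_\epsilon \varphi(\underline i)} \asymp e^{-n(1 + (1-\epsilon)\varphi(\underline i)/\psi(\underline i))}.
$$

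The key step is then to weight and estimate $\#\C_n^+$ via the Gibbs property of the potential $V_t := (\beta(t)-\alpha t)\varphi + t\psi$, which has pressure zero by the definition of $\beta$. Since $\C_n^+$ is a Moran-type partition at scale $e^{-n}$, the Gibbs property gives $\sum_{\omega \in \C_n^+} e^{S_{|\omega|}V_t(\omega)} \asymp 1$, and stratifying the cylinders by the value of $S_{|\omega|}\psi$ yields sharp exponential asymptotics in $n$. Pairing this with the uniform diameter estimate above, the sum $\sum_{\sigma \in \C_n^{+,i}} |\sigma|^{s'}$ is bounded by an exponential $e^{n \Delta_i(s',t,\epsilon)}$; requiring $\Delta_i < 0$ for both $i \in \{0,1\}$ translates, after letting $\epsilon \to 0$, to the joint system $s' > \beta(t)$ together with the feasibility constraints $\beta(t) \geq -t\psi(\underline i)/\varphi(\underline i)$. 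Optimising over $t$ and letting $s' \searrow s$ then gives $\Haus^{s'}(S_*^\alpha) < \infty$ for every $s' > s$, hence $\dim_{H} S_*^\alpha \leq s$.

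The main obstacle is to verify that this optimisation indeed produces the infimum in (\ref{theocond}) and not a weaker bound. Two regimes must be distinguished: if the unconstrained minimiser $t_0$ of $\beta$ is feasible, i.e.\ satisfies $\beta(t_0) \geq -t_0\psi(\underline i)/\varphi(\underline i)$ for both $i$, then the bound is $\beta(t_0) = H(\alpha)$; otherwise the infimum is attained on the boundary, at an intersection of $\beta$ with one of the lines $-t\psi(\underline i)/\varphi(\underline i)$. This dichotomy corresponds precisely to the phase transition depicted in Figures~\ref{nomin}--\ref{min}, and both regimes have to be checked individually. A subsidiary technical point is that the cover $\C_n^{+,i}$ uses the $(1-\epsilon)$-relaxation in the length $n_\epsilon$, so one must take the limit $\epsilon \to 0$ \emph{after} optimising over $t$ in order not to lose exactness at the boundary of the feasible region.
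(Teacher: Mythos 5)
Your overall strategy coincides with the paper's: cover $S_{*}^{\alpha}$ by $\bigcup_{n,i}\C_{n}^{+,i}$, convert the $s'$-premeasure of each level-$n$ layer into a Gibbs sum for the zero-pressure potential $(\beta(t)-\alpha t)\varphi+t\psi$, and obtain a geometric series in $n$ that converges exactly when $s'>\beta(t)$ and $\beta(t)\geq-t\psi(\underline i)/\varphi(\underline i)$. There is, however, a concrete error at the first step. The collection $\C_{n}^{+}$ is defined by $|S_{m}\chi(\omega)-n|\prec1$ with $\chi=\psi-\alpha\varphi$, \emph{not} by $|S_{m}\varphi(\omega)+n|\prec1$. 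Unless $\mu_{\psi}$ is $\delta$-Ahlfors regular, fixing $S_{m}\chi$ does not fix $S_{m}\varphi$, so the cylinders of $\C_{n}^{+}$ do not have comparable diameters and your claim that $|\sigma|\asymp e^{-n(1+(1-\epsilon)\varphi(\underline i)/\psi(\underline i))}$ uniformly in $\sigma\in\C_{n}^{+,i}$ is false; the reduction to counting $\#\C_{n}^{+}$ at a single scale therefore does not work as stated. Your parenthetical suggestion to stratify by $S_{|\omega|}\psi$ is the correct repair (within a stratum diameters are comparable), but it contradicts the uniform-diameter claim and is never carried out. The paper sidesteps the issue: it keeps $e^{s'S_{T_{n}(\omega)+n_{\epsilon}}\varphi(\omega)}$ inside the sum over $\C\in\C_{n}^{+,i}$ and uses the defining property $S_{T_{n}(\omega)}\chi(\omega)=n+O(1)$ only to insert the factor $e^{t_{s}(S_{T_{n}(\omega)}\chi(\omega)-n)}\asymp1$, which turns each summand into $e^{S_{T_{n}(\omega)}((\beta(t_{s})-\alpha t_{s})\varphi+t_{s}\psi)}$ times the explicit prefactor $e^{-n((1-\epsilon)s'\varphi(\underline i)/\psi(\underline i)+t_{s})}$; the Gibbs property then bounds the inner sum by a constant with no cardinality estimate needed.

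A second, smaller point: no optimisation over $t$ and no case distinction between interior minimum and boundary intersection is required for this lemma. One simply takes $t_{s}$ with $\beta(t_{s})=s$, where $s$ is the infimum in (\ref{theocond}); the constraint $\beta(t_{s})\geq-t_{s}\psi(\underline i)/\varphi(\underline i)$ is then available by the definition of $s$, and since $\varphi(\underline i)/\psi(\underline i)>0$ the exponent $c_{1}\varphi(\underline i)/\psi(\underline i)+c_{2}$ (with $c_{1}>0$ coming from $s'>s$ and $c_{2}=\beta(t_{s})\varphi(\underline i)/\psi(\underline i)+t_{s}\geq0$) is strictly positive, so $\Haus^{s'}(S_{*}^{\alpha})<\infty$ for every $s'>s$. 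Your two-regime dichotomy concerns where the infimum is attained, which is relevant to interpreting $s$ but not to the upper-bound computation itself.
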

\begin{proof}
Let $t_{s}$ be such that $\beta(t_{s})=s$, take $s'>s$ and fix $i\in\{0,1\}$ for the rightmost intersection.
$$\Haus^{s'}(S_{*}^{\alpha})\leq \sum_{n\in\N}\sum_{\C\in\C_{n}^{+,i}}|\C|^{s'}\asymp
\sum_{n\in\N}\sum_{\C\in\C_{n}^{+,i}}e^{\sup_{\omega\in\C}s'S_{T_{n}(\omega)+n_{\epsilon}}\varphi(\omega)}$$
$$\prec \sum_{n\in\N}e^{-n(1-\epsilon)s'\varphi(\underline i)/\psi(\underline i)}\sum_{\C\in\C_{n}^{+,i}}e^{s\sup_{\omega\in\C}S_{T_{n}(\omega)}\varphi(\omega)}$$
$$\asymp \sum_{n\in\N}e^{-n(1-\epsilon)s'\varphi(\underline i)/\psi(\underline i)-n t_{s}}\sum_{\C\in\C^{+}_{n}}e^{n t_{s}+s\sup_{\omega\in\C}S_{T_{n}(\omega)}\varphi(\omega)}$$
$$\asymp \sum_{n\in\N}e^{-n(1-\epsilon)s'\varphi(\underline i)/\psi(\underline i)-n t_{s}}\sum_{\C\in\C^{+}_{n}}e^{\sup_{\omega\in\C}S_{T_{n}(\omega)}s\varphi(\omega)+t_{s}\chi(\omega)}$$
$$\prec\sum_{n\in\N}e^{-n(1-\epsilon)s'\varphi(\underline i)/\psi(\underline i)-n t_{s}}\sum_{\C\in\C^{\pm}_{n}}e^{\sup_{\omega\in\C}S_{T_{n}(\omega)}(\beta(t_{s})-\alpha t_{s})\varphi(\omega)+t_{s}\psi(\omega)}$$
And using the fact that $\sum_{\C\in\C^{\pm}_{n}}\exp{(\sup_{\omega\in\C}S_{T_{n}(\omega)}(\beta(t_{s})-\alpha t_{s})\varphi(\omega)+t_{s}\psi(\omega))}\asymp 1$ we have
$$\Haus^{s'}(S_{*}^{\alpha})\leq \sum_{n\in\N}e^{-n((\beta(t_{s})+c_{1})\varphi(\underline i)/\psi(\underline i)+t_{s})}=\sum_{n\in\N}e^{-n(c_{1}\varphi(\underline i)/\psi(\underline i)+c_{2})}$$
for some constant $c_{1}>0$. Now as $\beta(t_{s})\geq-t_{s}\psi(\underline i)/\varphi(\underline i)$ we must have $c_{2}\geq0$ and hence the measure is bounded. So for every $s'>s$ the Hausdorff measure is bounded and therefore $\dim_{H}S_{*}^{\alpha}\leq s$, as required.
\end{proof}
Combining those two lemmas we find that $\dim_{H}S^{\alpha}\leq s$, which completes the upper bound part of the proof.

\subsection{Lower bound}
Again the proof for the lower bound needs to be split into two parts. The first part applies when there exists $t_{0}$ such that $\beta(t_{0})=\alpha$. In this case $\beta(t)$ has minimum value $H(\alpha)$ and we have the following result.

\begin{Lemma}
If $t_{0}$ exists we have $\dim_{H}S^{\alpha}\geq H(\alpha)$.
\end{Lemma}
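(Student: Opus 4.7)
The natural target is a subset of $K_\alpha \cap S^\alpha$ of Hausdorff dimension at least $H(\alpha)$. Since $\dim_H K_\alpha = H(\alpha)$ by Theorem~\ref{mainPesinTheo}, the plan is to show that a ``generic'' point of $K_\alpha$ (in a dimension-theoretic sense) has no $\alpha$-H\"older derivative. A direct measure-theoretic argument via the Gibbs measure $\mu_{\varphi_{t_0}}$ alone only gives the weaker bound $\alpha$ (its own dimension), so I would instead construct a Moran-type subset by hand.

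First I would use the cylinder-level formula from the Kesseb\"ohmer--Stratmann lemma cited just before Lemma~\ref{boundedDifLem}: if $\omega$ carries an $i$-block of length $k$ at level $n$, then the neighbouring endpoint $\eta$ satisfies
\[
\frac{F_\psi(\omega)-F_\psi(\eta)}{|\omega-\eta|^\alpha}\asymp e^{S_n\chi(\omega)+k\psi(\underline i)},
\]
where $\chi=\psi-\alpha\varphi$. I would then fix a very sparse sequence $n_1<n_2<\dots$ and, at each level $n_m$, insert an $i$-block of length $k_m$ tuned so that $S_{n_m}\chi(\omega)+k_m\psi(\underline i)$ stays bounded at some fixed finite value $c_0>0$. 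This is exactly the condition in Lemma~\ref{boundedDifLem} that ensures the $\alpha$-H\"older limit cannot be $+\infty$ along these scales. Between the inserted blocks, I would let the coding vary freely but constrain the Birkhoff averages of $\varphi$ and $\psi$ so that the resulting point lies in $K_\alpha$, i.e.\ has local dimension exactly $\alpha$. Points of local dimension $\alpha$ that are not interval endpoints also satisfy $\overline d_\omega=\alpha$, so along the ``straight'' (block-free) subsequence the ratio tends to $|[\omega|_n]|^{o(1)}$ and by generic oscillation of the Birkhoff sum $S_n\chi$ one obtains a second cluster value of the ratio distinct from $c_0$. Together these produce $\omega\in S^\alpha$.

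The dimension computation is then a standard pressure/Moran estimate: the freely-chosen blocks contribute full combinatorial freedom inside $K_\alpha$, which by Theorem~\ref{mainPesinTheo} carries dimension $H(\alpha)=T(t_0)+t_0\gamma(t_0)=\beta(t_0)$, while the inserted $i$-blocks occur on a density-zero set of levels (by taking $n_m$ sparse enough) and hence contribute no loss of Hausdorff dimension in the limit. The relevant bookkeeping is powered by the defining equation $P((\beta(t_0)-\alpha t_0)\varphi+t_0\psi)=0$, which is the same pressure identity that controls the upper bound in the previous section.

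The main obstacle is verifying that the insertion of forced $i$-blocks is ``cheap'' in the Hausdorff-dimension sense: one must ensure both that the fluctuations of $S_{n_m}\chi(\omega)$ allow a valid choice of integer $k_m$ realising the target ratio $c_0$ for a positive-dimension set of $\omega$, and that the resulting Moran-type Cantor set still meets the Hausdorff mass-distribution estimate at exponent $H(\alpha)-\varepsilon$ for arbitrary $\varepsilon>0$. The condition $\beta(t_0)\geq -t_0\psi(\underline i)/\varphi(\underline i)$ (which is automatic at the minimiser $t_0$ under the hypothesis of the lemma) is precisely what guarantees that the ``cost'' of inserting $i$-blocks does not exceed the gain in cylinder count, paralleling the upper bound argument.
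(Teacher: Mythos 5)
Your strategy --- a Moran subset of $K_{\alpha}$ with sparse forced $i$-blocks --- is genuinely different from the paper's, and as sketched it has a real gap at the very step that puts the constructed points into $S^{\alpha}$. The inserted blocks supply one finite cluster value of the ratio, but membership in $S^{\alpha}$ requires a second, distinct cluster value, and you attribute this to ``generic oscillation of the Birkhoff sum $S_{n}\chi$''. For a point of $K_{\alpha}$ all you know is $S_{n}\chi=S_{n}\varphi\cdot(d^{n}_{\omega}-\alpha)=o(|S_{n}\varphi|)$; this sum can perfectly well stay bounded or converge (choose the coding so that $d^{n}_{\omega}\to\alpha$ fast), in which case the ratio can have a genuine finite nonzero limit and the point is \emph{not} in $S^{\alpha}$. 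You must build the oscillation in explicitly, e.g.\ drive $S_{n}\chi\to+\infty$ along a subsequence with a non-extremal next letter to force $\limsup=\infty$; note also that Lemma~\ref{boundedDifLem} cannot be invoked verbatim here, since its hypothesis $\overline d_{\omega}<\alpha$ fails on $K_{\alpha}$. Two further slips: the dimension of the Gibbs measure $\mu_{\varphi_{t_{0}}}$ is $H(\alpha)=T(t_{0})+t_{0}\alpha$, not $\alpha$, so the ``direct measure-theoretic argument'' you dismiss is not obviously weaker than the target; and the condition $\beta(t_{0})\geq-t_{0}\psi(\underline i)/\varphi(\underline i)$ is \emph{not} automatic at the minimiser --- its failure is precisely the pre-phase-transition regime discussed in the introduction, where the lemma must still hold --- although for blocks of density zero you do not actually need that condition.

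The paper avoids all of this by not working inside $K_{\alpha}$ at all. It builds $D_{<>,\epsilon}$ by alternating long cylinders drawn from $K_{\alpha-\epsilon}$ and $K_{\alpha+\epsilon}$, so that the local dimension ratio oscillates across $\alpha$; Lemmas~\ref{limsupLemmaThermo} and~\ref{liminfLemmaThermo} then give $\limsup=\infty$ and $\liminf=0$ for free, with no block insertion and no tuning of cluster values. The lower bound on dimension comes from a Kolmogorov-extension measure using $\mu_{q_{-}}$ on the $K_{\alpha-\epsilon}$ stretches and $\mu_{q_{+}}$ on the $K_{\alpha+\epsilon}$ stretches, which gives $\dim_{H}D_{<>,\epsilon}\geq\inf\{H(\alpha-\epsilon),H(\alpha+\epsilon)\}$, and letting $\epsilon\to0$ yields $H(\alpha)$ by continuity of $H$. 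If you want to salvage your construction, the cleanest fix is to import exactly this mechanism: alternate stretches with $d^{n}<\alpha$ (forcing $\limsup=\infty$) and stretches with $d^{n}>\alpha$ (forcing $\liminf=0$), which is the paper's argument in all but name.
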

\begin{proof}
We begin by constructing a subset $D_{<>,\epsilon}\subseteq D_{<>}$ and define a measure on it that will allow us to give a lower bound. Let  a small $\epsilon$ and $\epsilon'$ be given such that $\epsilon>\epsilon'>0$. Now partition $K_{\alpha-\epsilon}$ and $K_{\alpha+\epsilon}$ into sets $\D^{-}_{n}$ and $\D^{+}_{n}$, respectively consisting of finite cylinders $\omega^{*}$ such that $\log(\mu_{\psi}(\omega^{*}))/\log(|\omega^{*}|)$ gets arbitrarily close to the local dimension, i.e.\ $(S_{n}\psi(\xi))/(S_{n}\phi(\xi))<\alpha-\epsilon+\epsilon/n$ for all $\xi\in\omega^{*}\in\D^{-}_{n}$. Similarly $(S_{n}\psi(\xi))/(S_{n}\phi(\xi))>\alpha+\epsilon-\epsilon/n$ for all $\xi\in\omega^{*}\in\D^{+}_{n}$. We now form words in $D_{<>,\epsilon}$ by alternating words from the two $K$ sets such that 
$$D_{<>,\epsilon}=\{[\omega^{+}_{1}\omega^{-}_{1}\omega^{+}_{2}\omega^{-}_{2}\hdots]\,;\,\omega^{+}_{i}\in\D^{+}_{n_{2i}}\text{ and }\omega^{-}_{i}\in\D^{-}_{n_{2i+1}}\}$$
for a sequence of $n_{i}$ increasing fast enough such that the log ratio alternates between less than $\alpha-\epsilon'$ and $\alpha+\epsilon'$.
Applying Kolmogorov's Extension Lemma we can define a measure $\nu$ on $D_{<>,\epsilon}$ by taking the $\mu_{q_{-}}$ on cylinders in $\D^{-}_{n}$ and $\mu_{q_{+}}$ on cylinders in $\D^{+}_{n}$, where $q_{-}$ and $q_{+}$ satisfy $\gamma(q_{-})=\alpha-\epsilon$ and $\gamma(q_{+})=\alpha+\epsilon$ respectively. Thus for some cylinder in $D_{<>,\epsilon}$, we have 
$$\nu([\omega^{+}_{1}\omega^{-}_{1}\omega^{+}_{2}\omega^{-}_{2}\hdots\omega^{-}_{k}\omega^{+}_{*}])=\mu_{q_{+}}(\omega_{*}^{+})\left(\prod_{i=1}^{k}\mu_{q_{-}}(\omega^{-}_{i})\right)\left(\prod_{i=1}^{k}\mu_{q_{+}}(\omega^{+}_{i})\right)$$
if the cylinder ends with a partial word $\omega_{*}^{+}\supseteq\omega^{+}_{k+1}\in\D^{+}_{n_{k+1}}$. If the word ends with a partial word $\omega_{*}^{-}\supseteq\omega^{+}_{k}\in\D^{-}_{n_{k}}$ we get similarly
$$\nu([\omega^{+}_{1}\omega^{-}_{1}\omega^{+}_{2}\omega^{-}_{2}\hdots\omega^{+}_{k}\omega^{-}_{*}])=\mu_{q_{-}}(\omega_{*}^{-})\left(\prod_{i=1}^{k-1}\mu_{q_{-}}(\omega^{-}_{i})\right)\left(\prod_{i=1}^{k}\mu_{q_{+}}(\omega^{+}_{i})\right)$$
Note that $\nu(D_{<>,\epsilon})\asymp 1$ and since
$$\liminf_{r\to 0}\frac{\log\nu(B(x,r))}{\log r}\geq \inf\left\{\liminf_{r\to0}\frac{\log\mu_{q_{*}}(B(x,r))}{\log r}\,;\,q_{*}\in\{q_{-},q_{+}\}\right\}$$
We must have that the lower local dimension $\underline d^{\nu}$ with respect to the $\nu$ measure 
$$\underline d^{\nu}_{x}\geq \inf\{T(q_{*})+q_{*}\gamma(q_{*})\,;\,q_{*}\in\{q_{-},q_{+}\}\}$$
for all $x\in D_{<>,\epsilon}$. This means that $\dim_{H}D_{<>,\epsilon}\geq \inf\{H(\alpha\pm\epsilon)\}$ and thus as $\epsilon$ can be chosen arbitrarily small and clearly $D_{<>,\epsilon}\subseteq S^{\alpha}$ we have the required result.
\end{proof}

$H(\alpha)$ may however not be the lowest point of $\beta$ satisfying (\ref{theocond}) and the two possible cases are that $\beta$ does not have any lowest point or there is an intersection with $\beta$ to the right of the minimum. In either case the intersection happens at a point of $\beta$ where the slope is nonnegative and we will construct a subset of $S^{\alpha}$ and use the mass distribution principle to get an estimate of the lower bound. This will coincide with the upper bound when $\beta(t_{0})$, with $\gamma(t_{0})=\alpha$, does not satisfy (\ref{theocond}) and thus give us the final ingredient to establish the Hausdorff dimension of $S^{\alpha}$.

Let $n_{k}$ be a given sequence of fast increasing integers and define $N_{k}$ and $m_{k}$ by
$$N_{1}=n_{1}\text{ and }N_{k}=\left\lfloor\sum_{j=1}^{k}n_{j}+\chi(\underline 0)\sum_{j=1}^{k-1}m_{j}\right\rfloor\text{ for }k\geq2$$
and
$$m_{j}=\lfloor-N_{j}/\psi(\underline 0)\rfloor$$
We now define the partition $\C_{n}^{q_{1}}$ of $K_{\gamma(q_{1})}$ with $q_{1}$ such that $\beta(q_{1})=s$. By our assumptions above this means $\gamma(q_{1})<\alpha$ and so define $\C_{n}^{q_{1}}=\{\omega\in K_{\gamma(q_{1})}\}$ such that the length of the coding $|\omega|\asymp n$ and $|S_{|\omega|}\chi-n|\prec 1$, which gives us a partition of $K_{\gamma(q_{1})}$.
Construct $\mathcal M$ by alternately taking $M_{k}$ words of $\C^{*}_{n_{k}/M_{k}}$ and a string of $m_{k}$ 0s. So
\begin{multline*}\mathcal M=\{[\omega_{(1,1)},\hdots,\omega_{(1,M_{1})},\underline 0_{m_{1}},\\\omega_{(2,1)},\hdots,\omega_{(2,M_{2})},\underline 0_{m_{2}},\hdots]\,;\,\omega_{(i,j)}\in\C_{n_{i}/M}\text{ for $j$ s.t.\ }1\leq j\leq M\}\end{multline*}

Let $l_{k}$ be the length of the word which ends with $\omega_{(k,M_{k})}$ and let $\eta$ by a point in this cylinder, we then have by construction $e^{S_{l_{k}}\chi(\eta)}\asymp e^{N_{k}}$. And as $\omega_{k}$ is followed by a string of $m_{k}$ 0s we get
$$e^{S_{l_{k}}\chi(\eta)+m_{k}\psi(\underline 0)}\asymp e^{N_{k}+\lfloor -N_{k}/\psi(\underline 0)\rfloor\psi(\underline 0)}$$
which is obviously bounded. Since the local dimension is also less than $\alpha$ we have $\mathcal M\subseteq S^{\alpha}$. We now define a measure $\nu$ on cylinders of $\mathcal M$. For cylinders ending with a string of $0$s and $k\leq m_{u}$ we define

$$\nu([\omega_{(1,1)},\hdots,\omega_{(1,M_{1})},\underline 0_{m_{1}},\hdots,\omega_{(u,M_{u})},\underline 0_{k}]):=\prod_{j=1}^{u}\prod_{i=1}^{M}\mu_{q_{1}}([\omega_{(j,i)}])$$
$$\asymp\prod_{j=1}^{u}\mu_{q_{1}}([\omega_{(i,1)},\hdots,\omega_{(i,M_{i})}])$$

and similarly if the cylinder ends with $[\hdots,\omega_{(u+1,j)},\zeta_{1},\zeta_{2},\hdots,\zeta_{l}]$ where there exists a cylinder in $\C_{m_{u+1}}$ that is a subset of  $[\zeta_{1},\zeta_{2},\hdots,\zeta_{l}]$ we define

$$\nu([\omega_{(1,1)},\hdots,\omega_{(1,M_{1})},\underline 0_{m_{1}},\hdots,\omega_{(u,M_{u})},\underline 0_{m_{u}},\hdots,\omega_{(u+1,j*)},\zeta_{1},\zeta_{2},\hdots,\zeta_{l}])$$
$$:=\mu_{q_{1}}([\zeta_{1},\hdots,\zeta_{l}])\left(\prod_{i=1}^{j*}\mu_{q_{1}}([\omega_{(u+1,i)}])\right)\left(\prod_{j=1}^{u}\prod_{i=1}^{M}\mu_{q_{1}}([\omega_{(j,i)}])\right)$$
$$\asymp\mu_{q_{1}}([\omega_{(u+1,1)},\hdots,\omega_{(u+1,j*)},\zeta_{1},\hdots,\zeta_{l}])\prod_{j=1}^{u}\mu_{q_{1}}([\omega_{(i,1)},\hdots,\omega_{(i,M_{i})}])$$
By the Kolmogorov Extension Theorem this defines a measure on $\mathcal M$ and as $\mu_{q_{1}}(K_{\gamma(q_{1})})\asymp1$ we find that $\nu(\mathcal M)\asymp1$. It remains to show that for any subset $U$ of $\mathcal M$ the measure of $U$ is bounded by $|U|^{\beta(q_{1})}$. We do this by first establishing for some compound cylinder $\eta$ consisting of $M$ cylinders in $\C_{n_{1}}$ with $\xi\in[\eta,\underline 0_{m_{1}}]$ and $k\leq m_{1}$ that we have
$$\nu([\eta,\underline 0_{k}])=\mu_{q_{1}}([\eta])\asymp e^{S_{T_{n_{1}}(\xi)}(\beta(q_{1})-\alpha q_{1})\varphi(\xi)+q_{1}\psi(\xi)}
=e^{S_{T_{n_{1}}(\xi)}\beta(q_{1})\varphi(\xi)+q_{1}\chi(\xi)}
$$$$= \left(e^{S_{T_{n_{1}}(\xi)}\varphi(\xi)-n_{1}\varphi(\underline 0)/\psi(\underline 0)}\right)^{\beta(q_{1})}
\asymp \left(e^{S_{T_{n_{1}}(\xi)+\lfloor-n_{1}/\psi(\underline 0)\rfloor}\varphi(\xi)}\right)^{\beta(q_{1})}$$$$\asymp |[\eta, \underline 0_{m_{1}}]|^{\beta(q_{1})}\leq|[\eta,\underline 0_{k}]|^{\beta(q_{1})}$$

There exists two types of cylinders in $\mathcal M$, one ending with an incomplete compound word and one ending with a string of zeros. First assume the cylinder ends in a $0$-block, we then have for some positive constant $c$ with $\eta_{i}$ again referring to compound words 

$$\nu([\eta_{1},\underline 0_{m_{1}},\hdots,\eta_{u},\underline 0_{k}])=\prod_{j=1}^{u}\mu_{q_{1}}([\eta_{j}])$$
$$\leq c^{u}((|[\eta_{1},\underline 0_{\lfloor-n_{1}/\psi(\underline 0)\rfloor}]|)\hdots(|[\eta_{u},\underline 0_{\lfloor-n_{u}/\psi(\underline 0)\rfloor}]|))^{\beta(q_{1})}$$
$$\prec\frac{c^{u}(|\underline 0_{m_{u}}|)^{\epsilon}}
{((\underline 0_{m_{1}+\lfloor-m_{1}/\psi(\underline 0)\rfloor})\hdots(\underline 0_{m_{u-1}+\lfloor-m_{u-1}/\psi(\underline 0)\rfloor}))^{\beta(q_{1})}}(|[\eta_{1},\underline 0_{m_{1}},\hdots,\eta_{u},\underline 0_{m_{u}}]|)^{\beta(q_{1})-\epsilon}$$
Now it can be shown (see \cite{Kesseboehmer09}) that  if $n_{m}$ is chosen such that 
$$(1-1/\psi(\underline i))\frac{s}{m_{u}}\sum_{r=1}^{u-1}m_{r}-\frac{u\log c}{\varphi(\underline i)m_{u}}<\epsilon$$ 
the sequence is increasing fast enough such that
$$\frac{c^{u}(|\underline 0_{m_{u}}|)^{\epsilon}}
{((\underline 0_{m_{1}+\lfloor-m_{1}/\psi(\underline 0)\rfloor})\hdots(\underline 0_{m_{u-1}+\lfloor-m_{u-1}/\psi(\underline 0)\rfloor}))^{\beta(q_{1})}}\prec 1$$
and we get 
$$\nu([\eta_{1},\underline 0_{m_{1}},\hdots,\eta_{u},\underline 0_{k}])\prec (|[\eta_{1},\underline 0_{m_{1}},\hdots,\eta_{u},\underline 0_{m_{u}}]|)^{\beta(q_{1})-\epsilon}$$
$$\leq(|[\eta_{1},\underline 0_{m_{1}},\hdots,\eta_{u},\underline 0_{k}]|)^{\beta(q_{1})-\epsilon}$$
Now if the cylinder ends before the $0$ block we need to take our factor $M_{k}$ into account. As long as $n_{k}/M_{k}$ is small compared to $n_{k}$ we find for compound words $\eta_{k}$ and $\xi\in\eta$ that $S_{l}\chi(\xi)\asymp l$ for all $0<l\leq n_{k}$ and so if the cylinder ends with the cylinder $[\zeta_{1},\hdots,\zeta_{l}]\supseteq\eta_{u+1}$ we have 
 $$\mu_{q_{1}}[\zeta_{1},\hdots,\zeta_{l}]\asymp e^{\beta(q_{1})S_{l}\varphi(\xi)+q_{1}(\beta(q_{1}))S_{l}\chi(\xi)}\leq e^{\beta(q_{1})S_{l}\varphi(\xi)}\prec (|[\eta_{1},\hdots,\eta_{l}]|)^{\beta(q_{1})}$$
So we also have for these types of cylinders 
$$\nu([\eta_{1},\underline 0_{m_{1}},\hdots,\eta_{u},\underline 0_{m_{u}},\zeta_{1},\hdots,\zeta_{l}])\prec (|[\eta_{1},\underline 0_{m_{1}},\hdots,\eta_{u},\underline 0_{m_{u}},\zeta_{1},\hdots,\zeta_{l}]|)^{\beta(q_{1})-\epsilon}$$
Thus for any of such standard cylinders $U$ we have $\mu(U)\leq c |U|^{\beta(q_{1})-\epsilon}$. If we now consider a general open ball $B(x,r)$ centred at some $x\in E\cap\mathcal M$ we have by the strong separation condition and as we are dealing with strict (conformal) contractions that there exists a standard cylinder $U_{l}\subset\mathcal M$ of coding length $l$ and an integer $m$ independent of $l$ such that
$U_{l}\subseteq B(x,r)\subseteq U_{l-m}$.
Therefore $\mu(B(x,r))\leq\mu(U_{l-m})\leq c_{1}|U_{l}|^{\beta(q_{1})-\epsilon}$ for some positive constant $c_{1}$ independent of $r$ and hence $\mu(B(x,r))\leq c_{1}|U_{l}|^{\beta(q_{1})-\epsilon}\leq c_{2} r^{\beta(q_{1})-\epsilon}$ for some independent $c_{2}>0$.
Therefore applying the mass distribution principle we get
$$\dim_{H}S_{\psi}^{\alpha}\geq\dim_{H}\mathcal M\geq \beta(q_{1})-\epsilon$$
for arbitrarily small $\epsilon$ and the main theorem follows.
\section{Examples}

\paragraph{Example - $\delta$-Ahlfors measure}
As was already considered in the paper by Kesseb\"ohmer and Stratmann, for $\delta$-Ahlfors measures we have $\psi=\delta\varphi$. This means we must have $\beta(t)+t(\delta-\alpha)=\delta$ as $P((\beta(t)-\alpha t)\varphi+t\psi)=P((\beta(t)-\alpha t+t\delta)\varphi)=0$ and so $\beta(t)=t(\alpha-\delta)+\delta$. Note that $\psi=\delta\varphi$ also implies $\psi(\underline j)/\varphi(\underline j)=\delta$ for all $j\in J$ and as $\beta(t)$ does not have minimum, the dimension is given at the point of intersection when $\beta(t_{0})=-t_{0}\delta$. This means $t=-\delta/\alpha$ and hence $\dim_{H}S^{\alpha}=\beta(t_{0})=-\delta/\alpha(\alpha-\delta)+\delta=\delta^{2}/\alpha$, which is Falconer's result for $E_{\alpha}$ (see \cite{Falconer04}).

\paragraph{Example - Two linear contractions}
\begin{figure}
  \centering
  \setlength{\unitlength}{\textwidth/10}
  \begin{picture}(10,5)
  \put(7.5,4.5){\mbox{$\dim_{H}S_{\infty}^{1}$}}
  \put(7.5,2.9){\mbox{$\dim_{H}S^{1}$}}
  \put(6.4,1){\mbox{$\dim_{H}S_{0}^{1}$}}
  \put(1,0){\includegraphics[width=8\unitlength]{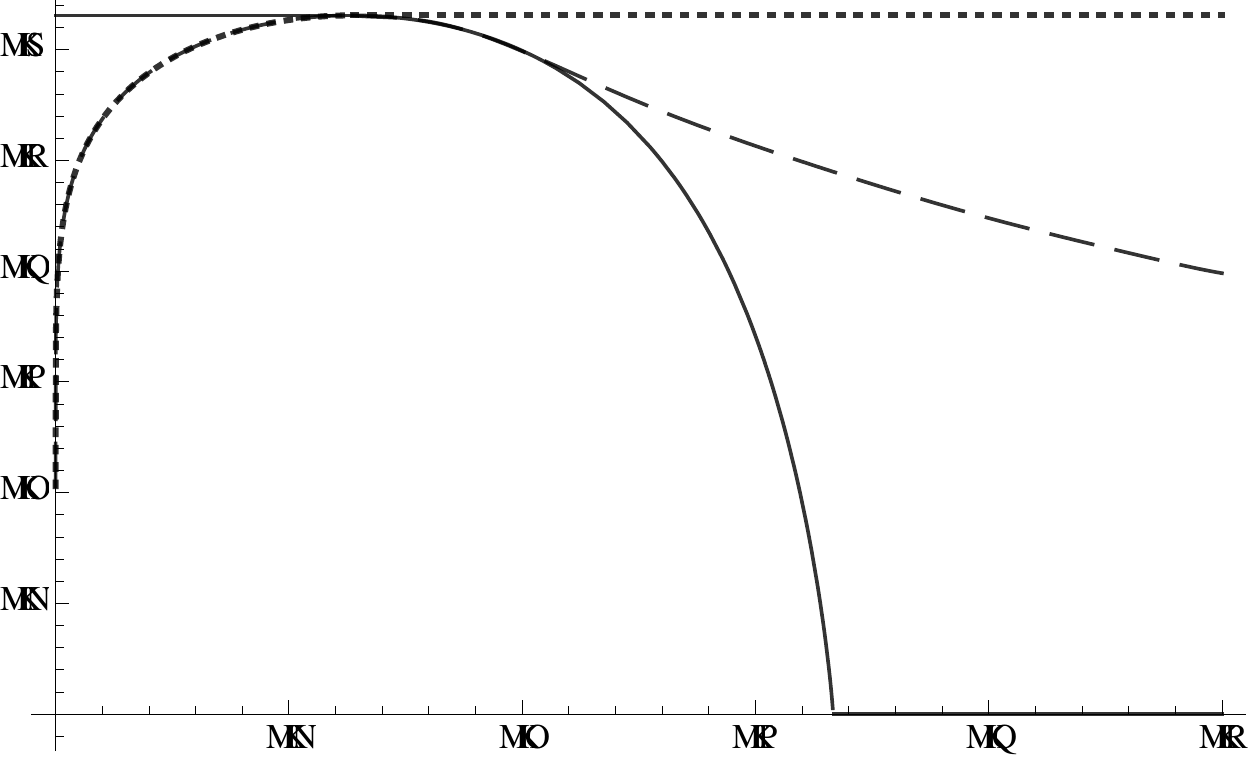}}
  \end{picture}
  \caption{Hausdorff dimension of $S^{1}$, $S^{1}_{0}$ and $S^{1}_{\infty}$ depending on $p_{1}$ for $p_{2}=1-p_{1}$ and $a_{1}=a_{2}=1/3$.}
  \label{p1Form}
\end{figure}

This problem was first attempted by Yao, Zhang and Li, who arrived at a partial solution for $S^{\alpha}$ and lower bounds for $S_{0}^{\alpha}$ and $S_{\infty}^{\alpha}$. Using the pressure equation we get $\beta(t)$ defined implicitly by
$$p_{0}^{t}a_{0}^{\beta(t)-\alpha t}+p_{1}^{t}a_{1}^{\beta(t)-\alpha t}=1$$
Now if we take $a=a_{0}=a_{1}$ we can find an explicit solution for $\beta(t)$
$$\beta(t)=\frac{-\log(p_{0}^{t}+p_{1}^{t})}{\log a}+\alpha t$$
And thus 
$$T(q)=\frac{-\log(p_{0}^{q}+p_{1}^{q})}{\log a}$$
and 
$$\gamma(q)=-T'(q)=\frac{p_{0}^{q}\log p_{0}+p_{1}^{q}\log p_{1}}{(p_{0}^{q}+p_{1}^{q})\log a}$$
and 
$$H(\gamma(q))=\frac{p_{0}^{q}\log p_{0}+p_{1}^{q}\log p_{1}}{(p_{0}^{q}+p_{1}^{q})\log a}q-\frac{\log(p_{0}^{q}+p_{1}^{q})}{\log a}$$
Taking $a=1/3$ we get the Cantor middle-third set as our limit set $E$ and a plot of $\dim_{H}S_{0}^{1}$, $\dim_{H}S_{\infty}^{\alpha}$ and $\dim_{H}S^{\alpha}$ depending on $p_{0}$ can be seen in Figure~\ref{p1Form}. Note that $\dim_{H}S^{\alpha}=H(\alpha)$ until the phase transition at about $p_{0}\approx0.2$.

\bibliographystyle{plain}
\bibliography{Biblio}

\end{document}